\newtheorem{def_private}{Definition}
\newtheorem{assumption}{Assumption}
\newcommand{\ra}[1]{\renewcommand{\arraystretch}{#1}}
\newcommand{\andrea}[1]{#1}
\newacro{DM}{decision making}
\newacro{AD}{automated driving}
\newacro{PDG}{powered descent guidance}
\newacro{ODE}{ordinary differential equation}
\newacro{RK4}{Runge-Kutta of order 4 integrator}
\newacro{HP}{heat pump}
\newacro{AC}{absorption cooling}
\newacro{ACM}{absorption cooling machine}
\newacro{FC}{free cooling}
\newacro{GBD}{generalized Benders' decomposition}
\newacro{KKT}{Karush-Kuhn-Tucker}
\newacro{NLP}{nonlinear program}
\newacro{MPC}{model predictive control}
\newacro{NMPC}{nonlinear model predictive control}
\newacro{MPPI}{model predictive path integral control}
\newacro{QP}{quadratic program}
\newacro{MIQP}{mixed-integer quadratic program}
\newacro{MILP}{mixed-integer linear program}
\newacro{MINLP}{mixed-integer nonlinear program}
\newacro{MIOCP}{mixed-integer optimal control problem}
\newacro{MI}{mixed-integer}
\newacro{MIP}{mixed-integer program}
\newacro{BB}{branch-and-bound}
\newacro{SQP}{sequential quadratic programming}
\newacro{RNN}{recurrent neural network}
\newacro{OCP}{optimal control problem}
\newacro{LQR}{linear quadratic regulator}
\newacro{iLQR}{iterative linear quadratic regulator}
\newacro{SLQ}{sequential linear quadratic programming}
\newacro{DDP}{differential dynamic programming}
\newacro{MCP}{mixed complementarity problem}
\newacro{IP}{interior point}
\newacro{ADMM}{alternating direction method of multipliers }
\newacro{RTI}{real time iteration}
\newacro{MDP}{Markov decision process}
\newacro{SOC}{second order cone}
\newacro{CIA}{combinatorial integral approximation}
\newacro{STO}{switching time optimization}
\newacro{RL}{reinforcement learning}
\newacro{DP}{dynamic programming}
\newacro{LSTM}{long short-term memory}
\newacro{NN}{neural network}
\newacro{PPO}{proximal policy optimization}
\newacro{PG}{policy gradient}
\newacro{TD}{temporal difference}
\newacro{SAC}{soft actor critic}
\newacro{PI}{policy iteration}
\newacro{SARSA}{state action reward state action}
\newacro{IL}{imitation learning}
\newacro{MLE}{maximum likelihood}
\newacro{SV}{surrounding vehicle}
\newacro{EV}{ego vehicle}
\newacro{FF}{feed forward network}
\newacro{DNN}{deep neural network}
\newacro{UGV}{unmanned ground vehicle}
\newacro{AC4MPC}{Actor Critic for Nonlinear Model Predictive Control}
\newcommand{\R}{\mathbb{R}}
\newcommand{\T}{\top}
\newcommand{\setT}{\mathbb{T}}
\newcommand{\setS}{\mathbb{S}}
\newcommand{\Z}{\mathbb{Z}}
\newcommand{\B}{\mathbb{B}}
\newcommand{\F}{\mathcal{F}}
\newcommand{\Fbar}{\bar{\mathcal{F}}}
\newcommand{\D}{\mathcal{D}}
\newcommand{\prob}{\mathcal{P}}
\newcommand{\NLP}{\mathrm{NLP}}
\newcommand{\MINLP}{\mathrm{MINLP}}
\newcommand{\MILP}{\mathrm{LB-MILP}}
\newcommand{\FNLP}{\mathrm{FNLP}}
\newcommand{\UB}{\mathrm{UB}}
\newcommand{\LB}{\mathrm{LB}}
\newcommand{\norm}[1]{\left\lVert #1 \right\rVert}
\DeclareMathSymbol{\shortminus}{\mathbin}{AMSa}{"39}
\begin{document}

\title{\bf \Large A Sequential Benders-based Mixed-Integer Quadratic Programming Algorithm and Its Implementation in the \texttt{CAMINO} Toolbox}

\author{
        Andrea Ghezzi$^*$ \and Wim Van Roy$^*$ \and \\Sebastian Sager \and Moritz Diehl
}

\titlerunning{A Sequential Benders-based MIQP Algorithm}

\institute{$^*$ The authors contributed equally \\
        A. Ghezzi \at
        Department of Microsystems Engineering (IMTEK) University of Freiburg, Germany \\
        \email{andrea.ghezzi@imtek.uni-freiburg.de}\\
        \emph{Corresponding author}     %
        \and
        W. Van Roy \at
        Department of Mechanical Engineering, KU Leuven, Belgium \\
        \email{wim.vanroy@kuleuven.be}
        \and
        S. Sager \at
        Institute of Mathematical Optimization, Otto-von-Guericke Universit\"at Magdeburg, Germany\\
        \email{sager@ovgu.de}
        \and
        M. Diehl \at
        Department of Microsystems Engineering (IMTEK) and Department of Mathematics, University of Freiburg, Germany \\
        \email{moritz.diehl@imtek.uni-freiburg.de}
}

\date{Received: xxxx / Accepted: xxxx}

\maketitle

\begin{abstract}
        Sequential quadratic programming and sequential convex programming efficiently solve nonlinear programs (NLPs) by linearizing inner nonlinearities while preserving the outer convex structure.
        This paper introduces a sequential mixed-integer quadratic programming (MIQP) algorithm to extend this methodology to mixed-integer nonlinear problems (MINLPs), leveraging the efficiency of modern MIQP solvers.
        The algorithm uses a three-step iterative process.
        First, the MINLP is linearized around the current iterate.
        Second, an MIQP is formulated and solved, with its feasible region restricted to a specific area around the linearization point.
        This region is defined using objective values and derivatives from previous iterations, drawing on concepts from generalized Benders' decomposition.
        Third, the integer variables from the MIQP solution are fixed, and an NLP involving only the continuous variables is solved.
        The best solution among all iterates becomes the linearization point for the next iteration.
        A fallback strategy based on a mixed-integer linear program (MILP) is used when MIQP progress stalls.
        This guarantees convergence to the global optimal solution for convex MINLPs.
        For nonconvex problems, the algorithm functions as a heuristic without global optimality guarantees.
        Numerical experiments show its competitiveness with other MINLP solvers on benchmark problems.
        In addition, the algorithm was successfully applied to mixed-integer optimal control problems, demonstrating its effectiveness in handling challenging nonlinear equality constraints.
        The proposed algorithm is publicly available at \url{https://github.com/minlp-toolbox/CAMINO} with the name \texttt{s-b-miqp}.
\end{abstract}

\keywords{Mixed-integer nonlinear programming (MINLP) \and sequential mixed-integer quadratic programming \and optimal control \and open-source scientific software}
\subclass{90C11 \and 90C30 \and 90C59 \and 65K05 \and 49M25 \and 49-04}

\section{Introduction}\label{sec: intro}

The class of \acp{MINLP} comprises problems characterized by continuous and discrete variables coupled with nonlinear relationships in the objective or constraints.
Hence, \acp{MINLP} represent a powerful optimization paradigm that offers a natural way to formulate a wide range of problems and applications.
The intersection of nonlinearity and discrete variables poses unique challenges for the numerical solution of such problems, which are characterized by NP-hard complexity \citep{Garey1979,Koppe2011}.
For unbounded support, they are even undecidable, meaning that not every unbounded MINLP problem can be solved.

Several excellent surveys and books cover algorithms for solving  \acp{MINLP} \citep{Belotti2013}, \citep[\S 21]{Sahinidis2019}.
Here, we provide only a high-level picture of the field and focus on the literature directly connected to the algorithm we propose.

One can divide the existing algorithms into two families.
The first consists of branch-and-bound-type algorithms, such as nonlinear branch-and-bound \citep{Dakin1965, Gupta1985} and spatial branch-and-bound \citep{Smith1999}.
The second family relies on the creation of cutting planes to iteratively tighten the integer search space, as in the \ac{GBD} \citep{Geoffrion1972}, outer approximation \citep{Duran1986} and its quadratic version \citep{Fletcher1994}, and extended supporting cutting planes \citep{Westerlund1995, Kronqvist2016}.
A combination of both families includes LP/NLP-based branch-and-bound \citep{Quesada1992,Abhishek2006} and branch-and-check~\citep{Thorsteinsson2001}.
Branch-and-bound-based methods are appealing because they leverage the existence of reliable, efficient, and open-source nonlinear solvers such as IPOPT \citep{Waechter2006} and WORHP \citep{Bueskens2013}.
In contrast, the second family of methods additionally requires efficient mixed-integer linear (quadratic) solvers such as the open-source CBC~\citep{Forrest2005}, SCIP~\citep{Achterberg2009}, Highs~\citep{Huangfu2018}, as well as the commercial CPLEX~\citep{CPLEX}, Gurobi~\citep{Gurobi}, and Mosek~\citep{Mosek}.
Moreover, there also exist solvers that directly implement MINLP-specific algorithms, such as the open-source Bonmin~\citep{Bonami2005}, Couenne~\citep{Belotti2009}, SHOT~\citep{Lundell2022a}, and the commercial Antigone \citep{Misener2014}, Baron \citep{Sahinidis1996}, Knitro~\citep{Byrd2006}, and Gurobi.
A sequential programming algorithms for mixed-integer programs was suggested in~\citep{Exler2007}, resembling a standard trust-region \ac{SQP} method for continuous decision spaces.
The algorithm in~\citep{Exler2007} can be applied to \textit{nonconvex} \acp{MINLP} and also to \acp{MINLP} where the integer variables cannot be relaxed.
However, the authors do not provide a proof of convergence to the global minimizer, even for \textit{convex} \acp{MINLP}.
The Mittelmann benchmarks webpage \citep{MittelmannBenchmark} gives an up-to-date overview of the current solvers and their performance.

Most generic MINLP solvers focus on solving \textit{convex} \acp{MINLP}, often providing only heuristics for \textit{nonconvex} ones.
\andrea{
        In fact, the solution of \emph{nonconvex} MINLPs is much harder, as it requires the construction of global underestimators.
        Most solvers implement a spatial branch-and-bound algorithm, which can be built on top of efficient branch-and-bound codes.
        For decomposition approaches, a method to extend \ac{GBD} for solving \emph{nonconvex} MINLPs is presented in \citep{Li2012}.
        This method decomposes the original \emph{nonconvex} MINLP into convex subproblems by generating tight convex relaxations.
        In general, adopting standard decomposition methods for solving \emph{nonconvex} MINLPs may fail, particularly in the presence of numerous equality constraints.
}
A practical example where these constraints arise is in the solution of \acp{MIOCP} via direct methods, such as direct collocation \citep{Tsang1975} or direct multiple shooting \citep{Bock1984}.
In such cases, equality constraints enforce continuity of the underlying dynamics equations at every grid node.

For the solution of \acp{MIOCP}, an important class of methods known as combinatorial integral approximation (\acs{CIA}\acused{CIA}) \citep{Sager2011a} employs an error-controlled decomposition approach.
In practice, the \ac{MIOCP} is discretized to obtain an \ac{MINLP}, which is then approximately solved by computing an integer approximation.
This approximation minimizes the distance from the relaxed solution of the \ac{MINLP} using a dedicated norm.
An open-source implementation of this approach, named pycombina \citep{Buerger2020a}, has been developed and demonstrated to be effective in various engineering applications \citep{Buerger2021, Robuschi2021}.
This method is capable of handling generic dwell time constraints \citep{Zeile2021} and provides fast approximate solutions, with the main computational challenge lying in the nonlinear optimization rather than combinatorial aspects.
The method relies on the similarity of the relaxed solution with the optimal one.
This is a drawback when dealing with coarse discretization grids or long uptimes, as shown in \citep{Buerger2023, Abbasi2023}.
To address this, \citep{Buerger2023} proposes a new distance function for the second step, based on a quadratic programming approximation around the relaxed \ac{MINLP} solution.
This approach, which is related to a single iteration of the method presented in this paper, often enhances the quality of the \ac{MIOCP} solution in terms of both objective and constraint satisfaction.

Another class of methods for approximately solving \ac{MIOCP} is \ac{STO}.
This approach relies on an initial sequence of states that can be applied to the system, with switching times optimized.
During the iterations, the sequence is adjusted either by inserting additional elements \citep{lee1999control, axelsson2008gradient} or by removing unnecessary sequences \citep{Abbasi2023}.
Also this approach is a heuristic and relies on the initial sequence provided to the solver.

\andrea{
\subsection{Contribution}
In this work, we introduce an algorithm for solving \acp{MINLP}, drawing inspiration from \ac{SQP} and classical \ac{MINLP} methods based on outer approximation.
We obtain a \ac{MIQP} master problem by linearizing the original \ac{MINLP} at the best solution found.
The integer part of the MIQP solution is then fixed in the \ac{MINLP} to obtain a \ac{NLP}.
After each MIQP-NLP iteration, we compute a new \ac{GBD} cut that is used to construct a polytope, named ``Benders region''.
The Benders region further restricts the feasible set of the MIQP to a portion where we expect to obtain an accurate MIQP approximation of the given MINLP.
To guarantee convergence to global minima in case of \emph{convex} MINLPs, we introduce a second master problem that takes the form of a MILP, which contains OA cuts for the best solution found and \ac{GBD} cuts for all the other visited points.
This MILP has the property to underestimate the original \ac{MINLP}, thus providing valid lower bound for the \ac{MINLP} objective.
For \emph{nonconvex} MINLPs, the presented algorithm is an heuristic.}

\andrea{
Differently from quadratic OA~\cite{Fletcher1994}, our MIQP master problem contains a linearization of the constraints of the original \ac{MINLP} only for the best solution found.
The other constraints correspond to the Benders-region, based on \ac{GBD} cuts, and infeasibility (or \emph{no-good}) cuts.
Also, the additional MILP master problem allows us to prove convergence to the global minimizer for convex MINLPs.
We generate infeasibility cuts by projecting the infeasible integer point on the relaxed feasible set rather than minimizing some norm of the constraint violation as done in \cite{Duran1986} or \cite{Fletcher1994}.
In our preliminary work \citep{Ghezzi2023a}, we presented the idea of sequentially solving \acp{MIP} while restricting their feasible set using polytopic regions computed with the information collected during algorithm iterations.
However, in \cite{Ghezzi2023a} the proposed algorithm is a pure heuristic even for \emph{convex} \acp{MINLP}, the polytopic regions do not leverage gradient information as they are based on a Voronoi partitioning, and we did not include a mechanism to handle infeasible \acp{NLP} as we assumed that they were always feasible by using slack variables.
}

A fundamental contribution of this work is the open-source implementation of the proposed algorithm within the package \texttt{CAMINO}\footnote[1]{\url{https://github.com/minlp-toolbox/CAMINO}} (Collection of Algorithms for Mixed-Integer Nonlinear Optimization).
\texttt{CAMINO} is written in Python and relies on CasADi \citep{Andersson2019} to both model optimization problems and interface existing solvers.
The users have the flexibility to choose from various solvers interfaced by CasADi, to solve both \acp{NLP} and \acp{MIP}, enabling an implementation free from reliance on commercial solvers.
In addition to the new algorithm presented in this paper, \texttt{CAMINO} includes a comprehensive range of other existing algorithms, such as GBD, (quadratic) outer approximation, feasibility pumps, and the ability to seamlessly use \ac{MINLP} solvers, already interfaced by CasADi, such as Bonmin.
Further details are available in the \texttt{README} file of the repository.

\subsection{Outline}
In the remainder of this section, we outline some preliminary definitions and notions utilized throughout the rest of the paper.
In Section~\ref{sec: s-b-miqp algorithm}, we introduce the new algorithm, first by giving a short overlook, later we describe in detail all its constituent components.
Moreover, we prove that for \emph{convex} MINLPs the algorithm converges in a finite number of iteration either to the global optimum or to a certificate of infeasibility.
We conclude the section by illustrating the behavior of the proposed algorithm with a simple tutorial example.
In Section~\ref{sec: extension to nonconvex case}, we propose an extension for treating nonconvex \acp{MINLP} by introducing heuristics to modify the generated cutting planes.
We demonstrate that these introduced heuristics do not compromise convergence to the global minimizer in the case of \emph{convex} \acp{MINLP}.
Additionally, for \acp{MINLP} where the integer variables enter affinely, making the relaxed integer feasible set convex, we prove that the proposed algorithm terminates either at feasible solution or with a certificate of infeasibility.
In Section~\ref{sec: results}, we compare the proposed algorithm against \andrea{Bonmin, Gurobi, SCIP, and SHOT} on a large subset of \acp{MINLP} instances from the MINLPLib \citep{Bussieck2003a,MINLPLib} containing at least one integer and one continuous variable.
Moreover, we present the results obtained with the proposed algorithm in two cases of optimal control for switched systems: a textbook example of a small, nonlinear, and unstable system, and a complex nonlinear energy system for building control.
For the latter example, we could only compare against Bonmin and the specialized algorithm \ac{CIA}.
Finally, Section~\ref{sec: conclusion} presents some conclusions and future work directions.

\subsection{Notation}
We denote with $\Z_{[a, b]}$ the set of integer numbers in the interval $[a, b]$ with $a, b \in \Z$ and $a < b$, and with $\Z_{\geq 0}$, the set of nonnegative integer numbers.
For a vector-valued function $f: \R^n \to \R^m$ we denote the transpose of the Jacobian as $\nabla f(z) = \left(\frac{\partial f}{\partial z} (z)\right)^\top$, such that $\nabla f(z) \in \R^{n\times m}$.
The term \textit{convex} \ac{MINLP} refers to a \ac{MINLP} that is convex with respect to both the continuous optimization variables and the relaxed integer variables.

\subsection{Mixed-Integer Nonlinear Problem formulations}\label{sec: formulations}
We consider the generic \ac{MINLP} with $x \in X \subset \R^{n_x}$ and $y \in Y \subset \Z^{n_y}$ of the form
\begin{tcolorbox}[colback=white,colframe=black]
\begin{mini}[2]
        {x \in X, y \in Y}{f(x, y)}
        {\mathcal{P}_\mathrm{MINLP}: \quad \qquad\label{op: generic MINLP}}{}
        \addConstraint{g(x, y)}{\leq 0}
        \addConstraint{h(x, y)}{= 0.}
\end{mini}
\end{tcolorbox}
\noindent Minimizing over the continuous variable $x \in X$ for a fixed $y \in Y$ yields the following parametric NLP
\begin{tcolorbox}[colback=white,colframe=black]
        \begin{mini}[2]
                {x \in X}{f(x, y)}{\prob_\NLP: \quad \qquad\label{op: NLP with fixed integers}}{J(y) \coloneqq}
                \addConstraint{g(x, y)}{\leq 0}
                \addConstraint{h(x, y)}{= 0},
        \end{mini}
\end{tcolorbox}
\noindent Now, we can conceptually state $\prob_\MINLP$  as
\begin{mini}[2]
        {y \in Y}{J(y).}{}{}
\end{mini}
We highlight the definition of $J$ since the proposed algorithm solves $\prob_\MINLP$ exploiting function evaluations and first-order information of $J$ itself.
By slight abuse of notation, we use $J(y)$ to denote either the optimization problem $\prob_\NLP$, its objective function, or a specific objective value.
The intended meaning should be clear from the context.

The proofs in this work rely on the following assumptions regarding problem \eqref{op: generic MINLP}.
\begin{assumption}\label{ass: cont and diff fn, integer finiteness}
        We assume that
         \begin{enumerate}
                \item $Y = \bar{Y} \cap \Z^{n_y}$, where both $X \subset \R^{n_x}$ and $\bar{Y}\subset \R^{n_y}$ are closed convex polyhedral sets.
         	\item Functions $f: \R^{n_x} \times \R^{n_y} \to \R$, $g: \R^{n_x} \times \R^{n_y} \to \R^{n_g}$ and $h: \R^{n_x} \times \R^{n_y} \to \R^{n_h}$ are at least twice continuously differentiable.
                \item The integer set $Y$ is finite.
         \end{enumerate}
\end{assumption}
\begin{assumption}\label{ass: constraint qualification}
        The Mangasarian-Fromovitz constraint qualification holds at the solution $x^\star$ of $\prob_\NLP$ with $J(y)$ for all $y \in Y$.
\end{assumption}

\begin{def_private}
        We define the feasible set of the integer variables as
        \[
                \F \coloneqq \{y \in Y \; | \; \exists \, x \in X, g(x, y) \leq 0, h(x, y) = 0 \}.
        \]
        Similarly, we define the feasible set of the relaxed integer variables as
        \[
                \Fbar \coloneqq \{y \in \bar{Y} \; | \; \exists \, x \in X,  g(x, y) \leq 0, h(x, y) = 0 \}.
        \]
\end{def_private}

\begin{def_private}
        Let $f$ be a continuously differentiable function.
        We define its linearization at $(\bar{x}, \bar{y})$ as a first-order Taylor expansion $f_{\mathrm{L}}(x, y; \bar{x}, \bar{y}) := f(\bar{x}, \bar{y}) + \nabla f (\bar{x}, \bar{y})^\top
        \left(\begin{matrix}
                x - \bar{x} \\
                y - \bar{y}
        \end{matrix}\right)$.
\end{def_private}

\smallskip
\begin{assumption}[Convexity]\label{ass: convexity}
        Function $h$ is affine, and functions $g_1, \dots, g_{n_g}$ and $f$ are convex on $X \times \bar{Y}$.
\end{assumption}
\noindent With Assumption \ref{ass: convexity}, the set $\Fbar$ would be convex and any subgradient of $J$ would provide a lower bound of $J$.

\section{Sequential Benders MIQP algorithm}\label{sec: s-b-miqp algorithm}
The algorithm proposed in this work operates by leveraging successive approximations of the original MINLP problem~\eqref{op: generic MINLP}, which are constructed using first- and second-order derivative information of the problem functions.
At each iteration, a quadratic master problem, denoted as $\mathcal{P}_\mathrm{BR-MIQP}$, is formulated by linearizing $\prob_\MINLP$ around the best solution visited so far, $(x_b, y_b)$.
This approximation is expected to provide high-quality candidate solutions within a defined region $\B$ around the current linearization point.
In parallel, a second master problem, $\mathcal{P}_\MILP$, is constructed using accumulated first-order information from all previously visited points, following a similar philosophy to generalized Benders decomposition~\citep{Geoffrion1972}.
These two master problems jointly address the combinatorial aspects of $\mathcal{P}_\mathrm{MINLP}$ while providing complementary perspectives for exploring the feasible space.
To evaluate the quality of the candidate solutions with respect to the original problem, an auxiliary nonlinear programming problem $\prob_\NLP$ is solved.
We include a second auxiliary problem $\mathcal{P}_\mathrm{FNLP}$, which is solved whenever $\prob_\NLP$ is infeasible.
The overall procedure is illustrated in Figure~\ref{fig: convex sbmiqp flow chart}, where a single iteration is represented by a full cycle from the red diamond labeled ``$\mathrm{LB} < \mathrm{UB}$'' back to itself.
The iteration index is denoted by $k \in \mathbb{Z}_{\geq 0}$.
As evident from the flowchart, the algorithm follows the general structure of outer approximation methods, but distinguishes itself by employing two separate master problems.

\begin{figure}
        \centering
        \includegraphics[width=\textwidth,trim={0 0 0.5cm 0},clip]{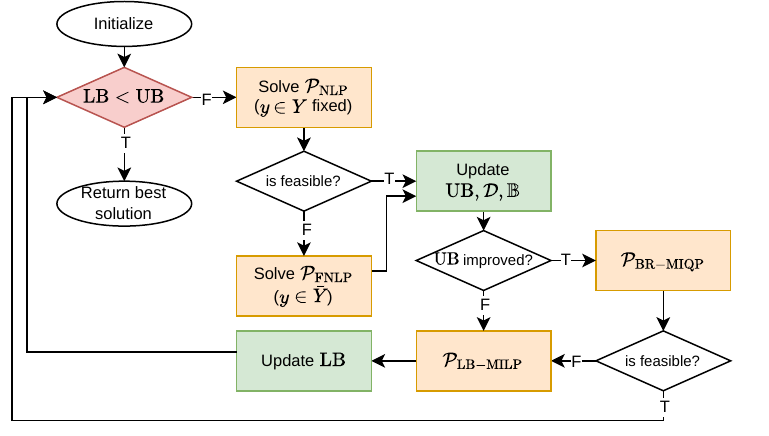}
        \caption{\andrea{High-level flow chart of the proposed algorithm.}
        The orange rectangles highlight the optimization problems solved during the algorithm iterations, the green rectangles perform operations on the data obtained by solving the different problems, and the red diamond highlights the termination condition.
        The outcome ``T'' of the diamonds corresponds to ``True'', and ``F'' corresponds to ``False''.
        For a rigorous description, see the pseudo-code of Algorithm~\ref{alg: benders region s-miqp}. \label{fig: convex sbmiqp flow chart}}
\end{figure}

Moving forward, we first present the constituent components of the algorithm highlighted in Figure~\ref{fig: convex sbmiqp flow chart}, namely the auxiliary problems, the master problems, and the termination condition.
Secondly, by means of pseudo-code, we present the algorithm in detail and derive its theoretical properties.
We close the section with a tutorial example illustrating the algorithm's behavior.

\subsection{Computation of $\nabla J(y)$ from the solution of the auxiliary problem $\prob_\NLP$}\label{subsec: auxiliary problem}
The auxiliary problem $\prob_\NLP$ \eqref{op: NLP with fixed integers}, introduced in Sec.~\ref{sec: formulations}, is obtained by fixing the integer variables of the original \ac{MINLP} \eqref{op: generic MINLP}.
For a given $\tilde{y} \in Y$, solving $\prob_\NLP$ yields the optimal value $J(\tilde{y})$.
If $\prob_\NLP$ is infeasible, we set $J(\tilde{y}) = +\infty$.
\andrea{
        The function $J$ is generally nonsmooth and nonconvex due to changes in the active set and possible nonuniqueness of optimal solutions of $\prob_\NLP$.
        Consequently, classical differentiability cannot be expected, and generalized derivatives must be employed.
}

\andrea{Under Assumption~\ref{ass: constraint qualification} (MFCQ), the set of Lagrange multipliers associated with a local solution is nonempty and bounded, and the value function $J$ is locally Lipschitz continuous around $\tilde y$.
To characterize the first-order sensitivity of $J$ with respect to $y$, we consider a lifted formulation of $\prob_\NLP$ in which $y$ is treated as an optimization variable and fixed by an equality constraint}
\begin{mini}[2]
    {x \in X, y \in Y}{f(x, y)}{}{\label{op: NLP with lifted y}}
    \addConstraint{g(x, y)}{\leq 0}
    \addConstraint{h(x, y)}{= 0}
    \addConstraint{y - \tilde{y}}{= 0.}
\end{mini}
\andrea{Let $(x^\star,\tilde y)$ be a locally optimal solution of
\eqref{op: NLP with lifted y}, and let
$\lambda \in \R^{n_g}$,
$\mu \in \R^{n_h}$, and
$\mu_{\tilde y} \in \R^{n_y}$
denote the Lagrange multipliers associated with the inequality constraints,
equality constraints, and the constraint $y-\tilde y=0$, respectively.
The corresponding \ac{KKT} conditions are}
\begin{align}
    \begin{cases}
        \nabla_x f(x^\star, \tilde{y}) + \nabla_x g(x^\star, \tilde{y}) \lambda + \nabla_x h(x^\star, \tilde{y}) \mu &= 0, \\
        \nabla_y f(x^\star, \tilde{y}) + \nabla_y g(x^\star, \tilde{y}) \lambda + \nabla_y h(x^\star, \tilde{y}) \mu &= -\mu_{\tilde{y}}, \\
        g(x^\star, \tilde{y}) &\leq 0, \\
        h(x^\star, \tilde{y}) &= 0, \\
        \lambda &\geq 0, \\
        \lambda_i g_i(x^\star, \tilde{y}) &= 0, \quad i=1, \dots, n_g.
    \end{cases}
\end{align}
\andrea{
\begin{proposition}[Clarke subgradient of the value function under MFCQ]
\label{prop:clarke-subgradient}
Suppose that MFCQ holds at $(x^\star,\tilde y)$ for problem \eqref{op: NLP with lifted y}, and that $(x^\star,\tilde y)$ is a locally optimal solution.
Then the value function $J$ is locally Lipschitz continuous around $\tilde y$ and
\[
    -\mu_{\tilde y} \in \partial_C J(\tilde y),
\]
where $\partial_C J(\tilde y)$ denotes the Clarke subdifferential of $J$ at
$\tilde y$.
\end{proposition}
}
\andrea{
\begin{proof}
Under MFCQ, the set of Lagrange multipliers of \eqref{op: NLP with lifted y} is nonempty and bounded.
Moreover, classical sensitivity results for parametric nonlinear programs imply that the value function is locally Lipschitz and that the multiplier associated with the parameter-fixing constraint characterizes the first-order variation of the optimal value.
See, e.g., \cite[Ch. 3]{Fiacco1983} for a detailed proof.
\end{proof}
}
\andrea{
As a consequence of Proposition~\ref{prop:clarke-subgradient}, we define}
\begin{equation}
    \nabla J(\tilde y) \coloneqq -\mu_{\tilde y},
\end{equation}
\andrea{
where $\nabla J(\tilde y)$ is interpreted as an element of the Clarke subdifferential $\partial_C J(\tilde y)$.}
\begin{remark}
\andrea{Because $J$ is nonsmooth, $\nabla J(\tilde y)$ is not a gradient in the classical sense.
When $\prob_\NLP$ is solved using an interior-point method, the computed multipliers approximate KKT multipliers of \eqref{op: NLP with lifted y}, and hence $\nabla J(\tilde y)$ represents an approximation of a Clarke subgradient.}
In practice, this approximation has not caused significant issues in our numerical experiments.
\end{remark}

\subsection{The feasibility auxiliary problem -- $\prob_\FNLP$}\label{subsec: restoration phase FNLP}

For some $\hat{y} \in Y$, the problem $\prob_\NLP$ which we aim to solve might be infeasible.
In this specific situation, we solve a feasibility problem to find the closest point to $\hat{y}$ that lies on the boundary of $\Fbar$.
The sought point is obtained by solving the following NLP
\begin{tcolorbox}[colback=white,colframe=black]
\parbox{0.2\columnwidth}{
                \raggedleft
                $\mathcal{P}_\FNLP(\hat{y}, y_b):$}
\parbox{0.7\columnwidth}{
\begin{mini!}[2]
        {x \in X, y \in \bar{Y}}{\norm{y - \hat{y}}_2^2}{\label{op: FNLP}}{}
        \addConstraint{g(x, y)}{\leq 0}
        \addConstraint{h(x, y)}{= 0}
        \addConstraint{\norm{y_b - y}_2^2}{\leq \norm{y_b - \hat{y}}_2^2,}{\quad \text{if } y_b \text{ given}.\label{cns: FNLP ball around y_b}}
\end{mini!}
}
\end{tcolorbox}
Constraint \eqref{cns: FNLP ball around y_b} is enforced only if a feasible (best) solution $y_b \in \F$ is available.
This constraint narrows the feasible set $\bar{Y}$ by requiring that $\bar{y}$ lies within a ball of radius $\norm{y_b - \hat{y}}_2$ around the best point $y_b$.
This requirement is particularly helpful in case of a disconnected feasible set $\bar{\mathcal{F}}$, but it does not introduce further complexity in case of a convex feasible set $\Fbar$ (cf. Lemma \ref{lemma: infeasibility}).

\andrea{
\begin{remark}
By construction, $Y = \bar Y \cap \mathbb Z^{n_y}$, and hence $Y \subseteq \bar Y$.
Consequently, the feasible set of the integer problem satisfies $\F \subseteq \Fbar$, independently of convexity or regularity assumptions.
In particular, if $\Fbar \neq \emptyset$, then $\mathcal P_\mathrm{FNLP}(\hat y,y_b)$ is feasible, since it reduces to the projection of $\hat y$ onto the nonempty set $\Fbar$.
If $\Fbar = \emptyset$, then $\mathcal P_\mathrm{FNLP}$ and hence the original MINLP are infeasible.
\end{remark}
}

We denote by $\bar{y}(\hat{y}, y_b)$ the integer component of the solution of $\mathcal{P}_\FNLP$.
Moreover, we utilize $\bar{y}(\hat{y}, y_b)$ to construct the following infeasibility cut
\begin{equation}\label{eq: infeasibility cut}
        (\hat{y} - \bar{y})^\T (y - \bar{y}) \leq 0.
\end{equation}
This constraint can be interpreted as the hyperplane on a convex set going through $\bar{y}$ with a normal vector $\hat{y} - \bar{y}$.
An illustration of the infeasibility cut is given in Figure~\ref{fig: infeasibility cut for convex MINLP}.
\begin{figure}
        \centering
        \includegraphics[]{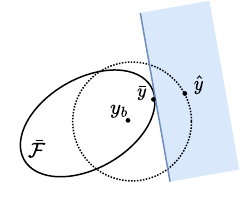}
        \caption{In lightblue an example of infeasibility cut for a convex $\prob_\MINLP$, i.e., the relaxed feasible set $\bar{\F}$ is a convex set and corresponds to the interior of the solid black line. $y_b \in \F$ is the current best and feasible solution, $\hat{y} \in \Z^{n_y}$ is an infeasible solution, and $\bar{y}$ is the solution of $\prob_\FNLP$. \label{fig: infeasibility cut for convex MINLP}}
    \end{figure}

\begin{lemma}\label{lemma: infeasibility}
        \andrea{Let $\hat y \in Y$ be such that $\hat y \notin \Fbar$, and let $\bar y$ be a solution of $\mathcal P_\mathrm{FNLP}(\hat y,y_b)$.
        Then $y=\hat y$ violates the infeasibility cut $(\hat y - \bar y)^\top (y - \bar y) \le 0$.}
\end{lemma}
\begin{proof}
        \andrea{
        By definition of $\mathcal{P}_\FNLP(\hat{y}, y_b)$, the point $\bar{y}$ satisfies $\|\bar{y} - y_b\|_2^2 \le \|\hat{y} - y_b\|_2^2$ whenever $y_b$ is available.
        Moreover, since $\bar{y}$ minimizes $\|y - \hat{y}\|_2^2$ over the feasible set, we have $\bar{y} \neq \hat{y}$ whenever $\hat{y} \notin \Fbar$.}
        Thus, for $y= \hat{y}$, $(\hat{y} - \bar{y})^\top(\hat{y} - \bar{y}) = \|\hat{y} - \bar{y}\|_2^2 > 0$, which implies that $\hat{y}$ violates constraint~\eqref{eq: infeasibility cut}.
\end{proof}

\begin{lemma}\label{lemma: infeasibility convex set}
        \andrea{Under Assumption~\ref{ass: convexity}, let $\hat y \notin \Fbar$, and let $\bar y \in \arg\min_{y \in \Fbar} \|y - \hat y\|_2^2$.
        Then, the inequality $(\hat{y} - \bar{y})^\T (y - \bar{y}) \leq 0$ is valid for all $y \in \Fbar$.}
\end{lemma}

\begin{proof}
        \andrea{Since $\bar y$ is the Euclidean projection of $\hat y$ onto the closed convex set $\Fbar$,} the first-order optimality condition of the projection problem states that, at the optimal point $\bar{y}$, the gradient $(y - \hat{y})$ evaluated at $\bar{y}$ must satisfy
        \[
        (\hat{y} - \bar{y})^\T (y - \bar{y}) \leq 0 \quad \forall y \in \Fbar,
        \]
        which is exactly the infeasibility cut~\eqref{eq: infeasibility cut}.
\end{proof}
Based on Lemma~\ref{lemma: infeasibility convex set}, we can conclude that the constraint~\eqref{eq: infeasibility cut} is satisfied for the best found solution $y_b$, if the problem is convex.
On the contrary, for a nonconvex $\Fbar$, we remark that constraints \eqref{cns: FNLP ball around y_b} may be active.
Thus, the corresponding infeasibility cut may not be tangential to $\Fbar$.

\subsection{Bookkeeping, lower bound function and Benders region}
Before introducing the two master problems, we present which data needs to be stored during algorithm iterations.
Then, we show how these data are used for constructing $J_\LB$, a function that outer approximates all the visited points and is relevant for the master problem $\prob_\MILP$.
Lastly, using $J_\LB$ we define the ``Benders region'' $\B$, which restricts the feasible set of $\prob_\mathrm{BR-MIQP}$ to a neighborhood of the current best solution, where we expect $\prob_\mathrm{BR-MIQP}$ to approximate $\prob_\MINLP$ well.

Consider a collection of points for which first-order information of $J$ is available in terms of evaluation tuples $(i, x_i, y_i, J(y_i)$, $\nabla J(y_i))$.
We collect them in a dataset
\[
        \D_k \coloneqq \left( (0, x_0, y_0, J(y_0), \nabla J(y_0)), \dots, (k, x_k, y_k, J(y_k), \nabla J(y_k))\right).
\]

At each iteration $k$, we classify the outcome and store the results accordingly.
If we successfully solve $\prob_\NLP$ and obtain a feasible solution with objective value $J(y_k)$, we add $k$ to the index set $\setT_k$, which tracks all feasible iterations.
Otherwise, if we solve the feasibility problem $\mathcal{P}_\mathrm{FNLP}(y_k, y_{b(k)})$ and obtain solution $\bar{y}_k$, we store this infeasible solution and add $k$ to the index set $\setS_k$. Note that $\setT_k \cup \setS_k = \Z_{[0, k]}$.

To track the best solution found up to iteration $k$, we define:
\begin{equation}\label{eq: expression of b(k)}
        b(k) \in
\begin{cases}
    \text{arg } \underset{i \in \setT_k}{\min} \; J(y_i), & \text{if } \setT_k \neq \emptyset, \\
    \text{arg } \underset{i \in \setS_k}{\min} \; \Vert \bar{y}_i - \hat{y}_i \Vert_2^2, & \text{otherwise}.
\end{cases}
\end{equation}
When multiple indices achieve the same minimum value, we select the smallest index to ensure uniqueness. Clearly, $b(k) \leq k$.
If $b(k) \in \setT_k$, then $(x_{b(k)}, y_{b(k)})$ represents the \textit{incumbent} solution, i.e., the best feasible solution found so far, with objective value $\UB = J(y_{b(k)})$.

The lower bound function $J_\mathrm{LB}$ is defined on the index sets $\setT_k, \setS_k$ and on the collected data $\D_k$ as follows
\begin{mini}[2]
        {\eta}{\eta}{\label{op: J_LB}}{{J_{\mathrm{LB}}(y; \setT_k, \setS_k, \D_k) \coloneqq}}
        \addConstraint{\eta}{\geq J(y_i) + \nabla J(y_i)^\top(y - y_i),}{\quad i \in \setT_k}
        \addConstraint{(y_j - \bar{y}_j)^\top (y - \bar{y}_j)}{\leq 0,}{\quad j \in \setS_k.}
\end{mini}
The term $\nabla J(y_i)$ is a subgradient of the potentially nonsmooth function $J$, and its computation has been discussed in Section \ref{subsec: auxiliary problem}.

\andrea{Consistent with the above discussion on restricting the search to a neighborhood of the incumbent solution,
we define the Benders region at iteration $k$ as}
\begin{align}\label{eq: benders region definition}
        \B_k \coloneqq &\{y \in \R^{n_y} \; | \; J_\mathrm{LB}(y;\setT_k, \setS_k, \D_k) \leq \bar{J}_k\},
\end{align}
where $\bar{J}_k$ is determined by
\begin{equation}\label{eq: reduced rhs J_bar}
        \bar{J}_k \coloneqq \alpha J(y_{b(k)}) + (1-\alpha) \mathrm{LB}, \quad \alpha \in [0, 1),
\end{equation}
where $\alpha$ is a user-chosen parameter, and $\LB$ is the value of the best lower bound of MINLP~\eqref{op: generic MINLP} found by the algorithm.
The computation of this reduced right-hand-side term follows an idea from~\citep{Kronqvist2020}.
It has the property to exclude all the visited points including the best point $y_{b(k)}$ from the current region $\B_k$.
Therefore, a point $y \in \B_k$ must be outer approximated by the linear cuts constructed using previous solutions and must have an objective lower than $\bar{J}_k$.

\subsection{Master problems: Benders-region mixed-integer quadratic program  -- $\prob_\mathrm{BR-MIQP}$}\label{subsec: benders tr computation}
The first master problem we introduce is a \ac{MIQP} since it can provide a more accurate approximation of $\prob_\MINLP$ compared to a \ac{MILP}, ultimately requiring fewer outer approximation iterations to achieve a solution.
Moreover, \ac{MIQP} master problems have been shown to be favorable over \ac{MILP} master problems in the context of outer approximation.
In fact, it is easy to construct simple examples where outer approximation with \ac{MILP} master problems requires complete enumeration to find the global optimum~\citep{Fletcher1994}.
A disadvantage of \ac{MIQP} master problems argued in~\citep{Fletcher1994} regards longer computation times compared to \ac{MILP}.
However, modern \ac{MIQP} solvers have significantly improved in efficiency, narrowing the runtime gap between \ac{MILP} and \ac{MIQP}.

In particular, the ``Benders-region MIQP'', $\mathcal{P}_\mathrm{BR-MIQP}$, incorporates a quadratic approximation of $\prob_\MINLP$ computed around the current best point $(x_{b(k)}, y_{b(k)})$ with the Hessian approximation $B_{b(k)}$.
\andrea{The matrix $B_{b(k)}$ is a symmetric and positive semidefinite approximation of the Hessian of the Lagrangian of $\prob_\NLP$ with respect to $(x,y)$, evaluated at $(x_{b(k)},y_{b(k)})$.
In practice, $B_{b(k)}$ may correspond to the exact Hessian of the Lagrangian of $\prob_\MINLP$ evaluated using primal and dual information of the best solution found, or to quasi-Newton updates, or set to zero, yielding a first-order model.}
Problem $\mathcal{P}_\mathrm{BR-MIQP}$ also includes additional constraints imposed by the Benders region \eqref{eq: benders region definition} and infeasibility cuts \eqref{eq: infeasibility cut}.
Hence, denoting the linearizations of functions $f, g, h$ at $(\bar{x}, \bar{y})$ as $f_\mathrm{L}(\cdot \;; \bar{x}, \bar{y}), g_\mathrm{L}(\cdot \;; \bar{x}, \bar{y})$, and $h_\mathrm{L}(\cdot \;; \bar{x}, \bar{y})$, respectively, we state
\begin{tcolorbox}[colback=white,colframe=black]
        ${\mathcal{P}_\mathrm{BR-MIQP}: }$
	\begin{mini}[2]
		{x \in  X , y \in  Y}{f_\mathrm{L}(x, y; {x}_{b(k)}, {y}_{b(k)}) +
			\frac{1}{2}  \left(  \begin{matrix} x  -  {x}_{b(k)} \\ y  -  {y}_{b(k)} \end{matrix}  \right)^{ \top}    B_{b(k)} \left( \begin{matrix} x  -  {x}_{b(k)} \\ y  -  {y}_{b(k)} \end{matrix} \right)}{\label{op: main-BR-MIQP-expanded}}{}
		\addConstraint{g_\mathrm{L}(x, y; {x}_{b(k)}, {y}_{b(k)})}{\leq 0}
		\addConstraint{h_\mathrm{L}(x, y; {x}_{b(k)}, {y}_{b(k)})}{= 0}
		\addConstraint{J(y_i) + \nabla J(y_i)^\top   (y-y_i)}{\leq \bar{J}_k,}{\quad i \in \setT_k}
                \addConstraint{(y_i - \bar{y}_i)^\top (y - \bar{y}_i)}{\leq 0,}{\quad i \in \setS_k.}
	\end{mini}
\end{tcolorbox}
We denote the objective value of \eqref{op: main-BR-MIQP-expanded} as $V_\mathrm{MIQP}$.
In the case no feasible solution has been found yet, i.e., $\setT_k = \emptyset$, the functions $f_\mathrm{L}, g_\mathrm{L}, h_\mathrm{L}$ are obtained by linearizing at the infeasible point $(x_{b(k)}, y_{b(k)})$, where $b(k)$ is computed according to \eqref{eq: expression of b(k)}.
Problem $\mathcal{P}_\mathrm{BR-MIQP}$ depends on the convex polyhedron $\B_k$.
There are no guarantees that an integer point $y$ such that $y \in Y \cap \B_k$ exists, thus $\mathcal{P}_\mathrm{BR-MIQP}$ might be infeasible.
When this happens, we trigger the if-condition of line \ref{alg: fallback strategy} and solve a different master problem, named $\prob_\MILP$, which is presented in the next section.

\subsection{Master problems: Lower-bound \ac{MILP} -- $\prob_\MILP$}\label{subsec: master problem lb-milp}

We present the second master problem named ``Lower-bound MILP'', $\prob_\MILP$, which has a structure similar to the master problem of the \ac{GBD} algorithm \citep{Geoffrion1972}.
In our case, we additionally include in the constraints the linear approximation of the original MINLP \eqref{op: generic MINLP} around the best solution found.
This can be seen as a mix between \ac{GBD} and linear outer approximation \citep{Fletcher1994}.
Problem $\prob_\MILP$ is introduced because a quadratic outer approximation method cannot guarantee convergence to a global optimum even for convex MINLP as already shown in \cite{Fletcher1994}.
In fact, the solution of $\mathcal{P}_\mathrm{BR-MIQP}$ (or an alternative \ac{MIQP} master problem) does not provide an outer approximation of $\mathcal{P}_\mathrm{MINLP}$.
Specifically, solving $\prob_\MILP$ lets us establish whether the current best solution is optimal or other solutions exist.

At the $k$-th iteration of the proposed algorithm, $\mathcal{P}_\mathrm{LB-MILP}$ can be formulated as a MILP in the full variable space as follows
\begin{tcolorbox}[colback=white,colframe=black]
$\mathcal{P}_\mathrm{LB-MILP}:$
\begin{mini!}[2]
                {\substack{\eta \in \R, \; x \in X, \; y \in Y}}{\eta}{\label{op: benders master problem}}{}
                \addConstraint{\eta }{\geq f_\mathrm{L}(x, y; x_{b(k)}, y_{b(k)}),}{\andrea{\quad \text{if } b(k) \in \setT_k}\label{cns: LB-MILP f_L OA}}
                \addConstraint{0 }{\geq g_\mathrm{L}(x, y; x_{b(k)}, y_{b(k)}),}{\quad \text{if } b(k) \in \setT_k \label{cns: LB-MILP g_L OA}}
                \addConstraint{0 }{= h_\mathrm{L}(x, y; x_{b(k)}, y_{b(k)}),}{\quad \text{if } b(k) \in \setT_k \label{cns: LB-MILP h_L OA}}
                \addConstraint{\eta }{\geq J(y_i) + \nabla J(y_i)^\top (y-y_i),}{\quad i \in \setT_k \andrea{\setminus \{b(k)\}} \label{cns: benders constraint in benders master problem}}
                \addConstraint{0}{\geq (y_i - \bar{y}_i)^\top (y - \bar{y}_i),}{\quad i \in \setS_k.\label{cns: LB-MILP inf cuts}}
\end{mini!}
\end{tcolorbox}
We denote the objective value of \eqref{op: benders master problem} as $V_\mathrm{MILP}$.
Unlike $\prob_\mathrm{BR-MIQP}$, the outer approximation \andrea{objective cut \eqref{cns: LB-MILP f_L OA}} and constraint cuts \eqref{cns: LB-MILP g_L OA}-\eqref{cns: LB-MILP h_L OA} are only imposed if the current best solution $(x_{b(k)}, y_{b(k)})$ is feasible, i.e., $b(k) \in \setT_k$.
\andrea{
        This ensures that no cuts are imposed when the variables $y$ are fractional.
}
In $\prob_\MILP$, for each feasible visited solution $y_i$ we construct a linear approximator in the integer space of the nonlinear function $J: Y \mapsto \R$, cf., \eqref{op: NLP with fixed integers}.
Thus, by minimizing the slack variable $\eta$, we aim to find the minimum of the epigraph of the function obtained by the intersection of all the epigraphs of linear models.
\andrea{
        Since we impose the OA constraints \eqref{cns: LB-MILP f_L OA}-\eqref{cns: LB-MILP h_L OA}, the Benders constraints \eqref{cns: benders constraint in benders master problem} are imposed for all the indices in $\setT_k$ but $b(k)$ if $b(k) \in \setT_k$.
        Note that the set minus operation is defined such that if $b(k) \notin \setT_k$ then $\setT_k \setminus \{b(k)\} = \setT_k$.
}
Under Assumption~\ref{ass: convexity}, the objective value $V_\mathrm{MILP}$ of $\prob_\MILP$ at an optimal solution provides a lower bound on the objective value of $\prob_\MINLP$.
Therefore, the lower bound $\LB$ is updated whenever the solution of $\prob_\MILP$ yields a $V_\mathrm{MILP}$ higher than the current $\LB$.
Eventually, in case $\prob_\MILP$ is infeasible we set $V_\mathrm{MILP} = +\infty$.
Infeasibility of $\prob_\MILP$ might happen if at the iteration $k$ the algorithm has tested every value $y \in Y$ and $\setT_k = \emptyset$, for further details see Section~\ref{subsec: theory sbmiqp}.

\subsection{The S-B-MIQP algorithm}\label{sec: algorithm}
Algorithm~\ref{alg: benders region s-miqp} presents the complete Sequential Benders MIQP (S-B-MIQP) algorithm in detail.
First, in this subsection, we consider Assumption~\ref{ass: convexity} about convexity to hold; thus, line~\ref{alg:modification} is omitted and we assume $\tilde{\cal D}_k \coloneqq  {\cal D}_k$.
We address the nonconvex case in Section \ref{sec: extension to nonconvex case}.

Consistent with the notation adopted earlier, in Algorithm~\ref{alg: benders region s-miqp}, the letter $k$ denotes the iteration index, where $k \in \Z$.
Algorithm~\ref{alg: benders region s-miqp} begins with an integer point $y_0 \in Y$ provided by the user, along with a lower bound, $\LB$, obtained by solving the integer relaxation of \eqref{op: generic MINLP}, while the upper bound is $\UB = +\infty$.
Following initialization, we enter a cycle that terminates only when the lower bound becomes equal to the upper bound, $\UB$.
This termination condition is common for mixed-integer programming algorithms.

The first half of the cycle involves evaluating the quality of the integer solution $y_k$ and solving for the continuous variable $x$.
In each iteration, the upper bound is updated by comparing the current $\UB$ with $J(y_k)$, the objective value of $\prob_\NLP$.
In case $\prob_\NLP$ is infeasible for a given $y_k$, we solve the feasibility problem $\prob_\FNLP$, whose solution is used to construct specific cutting planes aiming to steer the integer solution back to the feasible set.
The information obtained in each iteration is stored in $\D_k$. The index sets $\setT_k$ and $\setS_k$ are updated based on the feasibility of $\prob_\NLP$.

The second half of Algorithm~\ref{alg: benders region s-miqp} focuses on computing new integer solutions and lower bounds.
The integer solutions are computed in the master problems $\prob_\mathrm{BR-MIQP}$ and $\prob_\mathrm{LB-MILP}$.
The idea is to approach the minimizer of $\prob_\MINLP$ \eqref{op: generic MINLP} by solving \acp{MIQP}, as done in \ac{SQP} for continuous problems.
\andrea{
        Specifically, $\prob_\mathrm{BR-MIQP}$ is solved if a new incumbent solution is found in the current or in the previous iteration, cf. line \ref{alg: solving bmiqp} of Algorithm~\ref{alg: benders region s-miqp}.
        In case, the incumbent solution is not improving or $\prob_\mathrm{BR-MIQP}$ becomes infeasible due to tighter Benders regions, Algorithm~\ref{alg: benders region s-miqp} switches to solving $\prob_\mathrm{LB-MILP}$.
        The solution of $\prob_\mathrm{LB-MILP}$ provides valid lower bound on the objective of $\prob_\MINLP$ \eqref{op: generic MINLP}.
        The solution of $\prob_\mathrm{BR-MIQP}$ is resumed only if the integer solution found by $\prob_\mathrm{LB-MILP}$ produces a new incumbent solution.
        In this case, Algorithm~\ref{alg: benders region s-miqp} attempts to solve a new $\prob_\mathrm{BR-MIQP}$ constructed around the new incumbent solution.
}

\begin{algorithm}
	\caption{Sequential Benders MIQP (S-B-MIQP)}~\label{alg: benders region s-miqp}
	\begin{algorithmic}[1]\normalsize
		\State \textbf{Initialize}: $y_0 \in Y$, $\B_0 \gets Y$, $k \gets 0$, $\setS_{-1} \gets \emptyset$, $\setT_{-1} \gets \emptyset$, $\UB= + \infty$, $\LB= - \infty$, $b(k) \gets 0$, \andrea{\texttt{needMILP} $\gets$ \texttt{False}}
                \State $\LB \gets \min_{y \in \bar{Y}} J(y)$ \Comment {\footnotesize{\ttfamily LB given by the objective of \ac{MINLP} relaxation}} \normalsize
		\While {$\LB < \UB$}:
                \State Given $y_k$ solve $\prob_\NLP$
                \If {$\prob_\NLP$ is feasible}
                        \State Store solution $(k, x_k, y_k, J(y_k), \nabla J(y_k))$ in $\D_k$ and $\setT_k \gets \setT_{k-1} \cup \{k\}$
                        \If {$J(y_k) < \UB$}
                                \State $\UB \gets J(y_k)$ \Comment {\footnotesize{\ttfamily Update upper bound}} \normalsize
                        \EndIf
                \Else
                        \State Solve $\mathcal{P}_\FNLP(y_k, y_{b(k)})$ and get $\bar{y}_k$
                        \State Store solution $(k, x_k, y_k, \bar{y}_k, \Vert \bar{y}_k - y_k \Vert_2^2)$ in $\D_k$ and $\setS_k \gets \setS_{k-1} \cup \{k\}$
                \EndIf
                \State Compute $b(k)$ according to \eqref{eq: expression of b(k)}
                \State Modify gradients in $\D_k$, get $\tilde{\D}_k$  \label{alg:modification} \Comment {\footnotesize{\ttfamily Required only for noncvx MINLP (cf. Sec.~\ref{sec: extension to nonconvex case})}} \normalsize
                \State Compute Benders region $\B_k$ based on $\tilde{\D}_k$ according to \eqref{eq: benders region definition}

                \If {\andrea{$k - b(k) \le 1$}\label{alg: solving bmiqp}} \Comment {\footnotesize{\ttfamily \andrea{Best solution found in the last iteration}}} \normalsize
                        \State Solve $\mathcal{P}_\mathrm{BR-MIQP}$ given $(x_{b(k)}, y_{b(k)})$, $B_{b(k)}$, and $\B_k$. Get its solution $\tilde{y}$.
                \Else
                        \State \andrea{\texttt{needMILP} $\gets$ \texttt{True}}
                \EndIf
                \If {\andrea{\texttt{needMILP} is \texttt{True}} \textbf{or} $\mathcal{P}_\mathrm{BR-MIQP}$ is infeasible \label{alg: fallback strategy}}:
                \State Solve $\mathcal{P}_\mathrm{LB-MILP}$ with $J_\mathrm{LB}(y;\setT_k, \setS_k, \tilde{\D}_k)$, get solution $\tilde{y}$ and $V_\mathrm{MILP}$
                \If {$\mathcal{P}_\mathrm{LB-MILP}$ is infeasible}
                \State $\LB \gets +\infty$
                \Else
                \If {$V_\mathrm{MILP} > \LB$}
                \State $\LB \gets V_\mathrm{MILP}$ \Comment {\footnotesize{\ttfamily Update lower bound}} \normalsize
                \EndIf
                \EndIf
                \State \andrea{\texttt{needMILP} $\gets$ \texttt{False}}
                \EndIf
                \State $y_{k+1} \gets \tilde{y}$
		\State $b(k+1) \gets b(k)$
                \State $k \gets k+1$
                \EndWhile{}
	\end{algorithmic}
\end{algorithm}

\subsection{Algorithm properties}\label{subsec: theory sbmiqp}
\andrea{
Before stating the theoretical properties of Algorithm~\ref{alg: benders region s-miqp}, we emphasize that the proofs rely on standard results from convex analysis and outer-approximation theory.
In particular, Assumption~\ref{ass: convexity} guarantees that first-order Taylor expansions yield global underestimators, while Assumption~\ref{ass: constraint qualification} ensures existence and boundedness of Lagrange multipliers and validity of first-order optimality conditions.
}

\begin{lemma}\label{lemma: lower bounds}
        A continuously differentiable function $f: X \to \R$ is convex if and only if $X$ is a convex set and $f(z) \geq f(x) + \nabla f(x)^\top(z-x)$ holds for all $x, z \in X$.
\end{lemma}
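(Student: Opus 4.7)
The plan is to prove the two implications of this classical first-order characterization of convexity separately; both follow from elementary manipulations of the definition of convexity together with differentiability.

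For the ``only if'' direction, I would start from the definition of convexity applied to arbitrary $x, z \in X$ with parameter $\lambda \in (0,1]$. Writing convexity as $f(x + \lambda(z-x)) \leq f(x) + \lambda(f(z)-f(x))$, I rearrange and divide by $\lambda > 0$ to obtain $\frac{f(x+\lambda(z-x)) - f(x)}{\lambda} \leq f(z) - f(x)$. Taking the limit $\lambda \to 0^+$ and invoking differentiability of $f$ to identify the left-hand side with the directional derivative $\nabla f(x)^\top(z-x)$ yields the claimed gradient inequality. Convexity of $X$ is also needed, and follows directly from the convexity of $f$ together with the assumption that the domain is the set on which $f$ is defined as a real-valued function (alternatively, convexity of $X$ is part of the standing definition of a convex function).

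For the ``if'' direction, convexity of $X$ is assumed by hypothesis, so only convexity of $f$ needs to be established. For arbitrary $x, z \in X$ and $\lambda \in [0,1]$, the point $w \coloneqq (1-\lambda)x + \lambda z$ lies in $X$ by convexity of the domain. Applying the assumed gradient inequality at the base point $w$ to both $x$ and $z$ gives the two lower bounds $f(x) \geq f(w) + \nabla f(w)^\top(x-w)$ and $f(z) \geq f(w) + \nabla f(w)^\top(z-w)$. Multiplying these by $(1-\lambda)$ and $\lambda$ respectively and summing, the gradient contributions cancel because $(1-\lambda)(x-w) + \lambda(z-w) = 0$, leaving exactly $(1-\lambda)f(x) + \lambda f(z) \geq f(w)$, which is the convexity inequality for $f$.

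Main obstacle: there is no substantial difficulty here, as this is a textbook result reproduced for self-containedness and later used to justify the lower-bounding cutting planes built into $J_\mathrm{LB}$ and $\mathcal{P}_\mathrm{LB\text{-}MILP}$. The only mildly subtle point is the limit passage in the forward direction, which is immediate from differentiability and the definition of the directional derivative along the segment from $x$ to $z$.
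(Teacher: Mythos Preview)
Your proof is correct and is exactly the standard argument; the paper does not actually spell out a proof of this lemma but simply cites \S 3.1.3 of Boyd and Vandenberghe, where the very argument you give appears. So your proposal is fully consistent with the paper's approach, just made explicit.
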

\begin{proof}
        Given in \citep[\S 3.1.3]{Boyd2004}.
\end{proof}

\begin{lemma}\label{lemma: equiv LB-MILP and MINLP}
        \andrea{
        Suppose Assumptions~\ref{ass: cont and diff fn, integer finiteness} and~\ref{ass: convexity} hold.
        Assume further that all integer points have been visited at iteration $k$, i.e., $Y = \{ y_i \mid i \in \setT_k \cup \setS_k \}$.
        Then the lower-bound master problem $\mathcal P_\mathrm{LB-MILP}$ has the same set of optimal integer solutions as $\mathcal P_\mathrm{MINLP}$.
        }
\end{lemma}
\begin{proof}
        Proved in \citep[\S 13.1]{Li2006} for \ac{GBD}.
        \andrea{
                Here, the only difference is that the master problem $\mathcal{P}_\mathrm{LB-MILP}$ imposes OA constraints about $(x_{b(k)}, y_{b(k)})$ if $b(k) \in \setT_k$, while in the standard GBD master problem we would have a Benders cut like \eqref{cns: benders constraint in benders master problem} also for $(x_{b(k)}, y_{b(k)})$.
                Thus, the feasible set of $\mathcal{P}_\mathrm{LB-MILP}$ tighter compared to the one of the standard master problem in \ac{GBD}.
                Below, we provide the logical steps of the proof.
        }

\andrea{
        Under Assumption~\ref{ass: convexity}, the functions $f$ and $g_1,\dots,g_{n_g}$ are convex and $h$ is affine on $X \times \bar Y$.
        Hence, by Lemma~\ref{lemma: lower bounds}, their first-order Taylor expansions define global underestimators on $X \times \bar Y$.
        Therefore, every feasible solution of $\mathcal P_\mathrm{MINLP}$ is feasible for $\mathcal P_\mathrm{LB-MILP}$.
        This shows that $\mathcal P_\mathrm{LB-MILP}$ is an outer approximation of $\mathcal P_\mathrm{MINLP}$.
        Moreover, for each feasible integer point $y_i \in \setT_k$, the constraint $\eta \ge J(y_i) + \nabla J(y_i)^\top (y - y_i)$ is included in $\mathcal P_\mathrm{LB-MILP}$.
        Since all feasible integer points have been visited, these inequalities describe the epigraph of $J$ over $\F$ exactly.
        For each infeasible integer point $y_i \in \setS_k$, the infeasibility cut
        $(y_i - \bar y_i)^\top (y - \bar y_i) \le 0$ excludes $y_i$ from the feasible region of $\mathcal P_\mathrm{LB-MILP}$.
        Hence, the feasible integer assignments of $\mathcal P_\mathrm{LB-MILP}$ coincide exactly with $\F$.
        Consequently, minimizing $\eta$ over $\mathcal P_\mathrm{LB-MILP}$ is equivalent to minimizing $J(y)$ over $Y$.
        Therefore, the two problems share the same set of optimal integer solutions.
}
\end{proof}

\begin{theorem}\label{th: algorithm convergence}
        If Assumptions \ref{ass: cont and diff fn, integer finiteness}, \ref{ass: constraint qualification} and \ref{ass: convexity} hold, Algorithm \ref{alg: benders region s-miqp} \andrea{terminates in a finite number of iterations and returns either a global optimal solution of $\mathcal P_\mathrm{MINLP}$ or a certificate of infeasibility.}
\end{theorem}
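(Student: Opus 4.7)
The plan is to combine two ingredients: finiteness of the integer set $Y$, and the property that $\mathcal{P}_\mathrm{LB\text{-}MILP}$ always yields a valid lower bound on the optimal value of $\mathcal{P}_\mathrm{MINLP}$ under the convexity assumption. The overall skeleton is: show that in each outer iteration the algorithm either (a) discovers a fresh integer point in $Y$, or (b) solves $\mathcal{P}_\mathrm{LB\text{-}MILP}$, which immediately certifies optimality or infeasibility. Because $|Y|<\infty$ by Assumption~\ref{ass: cont and diff fn, integer finiteness}, option (a) can occur only finitely many times before option (b) must be triggered.

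First I would argue finite progress of the MIQP phase. The Benders region $\mathcal{B}_k$ is built with the reduced right-hand side $\bar J_k = \alpha J(y_{b(k)})+(1-\alpha)\mathrm{LB}$ with $\alpha\in[0,1)$, so every previously visited feasible point $y_i$ satisfies $J(y_i)\geq J(y_{b(k)})>\bar J_k$ whenever $\mathrm{LB}<J(y_{b(k)})$, and is therefore cut off by the Benders constraint indexed by $i$. Similarly, each infeasible iterate $\hat y_j$ is cut off by the infeasibility hyperplane \eqref{eq: infeasibility cut} by Lemma~\ref{lemma: infeasibility}. Hence any solution $\tilde y$ of $\mathcal{P}_\mathrm{BR\text{-}MIQP}$ is distinct from all previously visited integer points. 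Since $Y$ is finite, this can happen only finitely often before either $\mathcal{P}_\mathrm{BR\text{-}MIQP}$ becomes infeasible or the value test $V_k\le\mathrm{UB}$ fails, which in both cases triggers the fallback to $\mathcal{P}_\mathrm{LB\text{-}MILP}$ on line~\ref{alg: fallback strategy}.

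Next I would verify the lower bounding property of $\mathcal{P}_\mathrm{LB\text{-}MILP}$. By Lemma~\ref{lemma: lower bounds} combined with Assumption~\ref{ass: convexity}, the Taylor expansions $f_\mathrm{L},g_\mathrm{L}$ underestimate $f,g$ globally, and the affine $h$ satisfies $h_\mathrm{L}=h$, so the feasible set of \eqref{op: benders master problem} contains the feasible set of $\mathcal{P}_\mathrm{MINLP}$; likewise the Benders cuts \eqref{cns: benders constraint in benders master problem} are valid underestimators of $J$ by Assumption~\ref{ass: constraint qualification} together with Lemma~\ref{lemma: lower bounds} applied to $J$, and the infeasibility cuts \eqref{cns: LB-MILP inf cuts} remove only points outside $\bar{\mathcal F}$ by Lemma~\ref{lemma: infeasibility convex set}. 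Consequently $V_\mathrm{MILP}\le J^\star$ for the MINLP optimum $J^\star$, and the $\mathrm{LB}$ update is valid. Moreover, if $\mathcal{P}_\mathrm{LB\text{-}MILP}$ is infeasible then $\mathcal{P}_\mathrm{MINLP}$ is infeasible, which is precisely the infeasibility certificate; the algorithm correctly sets $\mathrm{LB}=+\infty$ and halts since $\mathrm{UB}=+\infty<\mathrm{LB}$ is impossible, but the termination test $\mathrm{UB}>\mathrm{LB}$ becomes false.

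Finally I would close the loop by finite termination. Each call to the fallback either proves infeasibility, closes the gap by raising $\mathrm{LB}$ to the current $\mathrm{UB}$, or returns a new integer $\tilde y\in Y$ not yet visited (again by the Benders cuts on the $y_i\in\setT_k$ together with Lemma~\ref{lemma: equiv LB-MILP and MINLP}). Because only finitely many integer vectors can be generated, after a finite number of iterations the bookkeeping set $\setT_k\cup\setS_k$ exhausts $Y$; at that point Lemma~\ref{lemma: equiv LB-MILP and MINLP} implies the $\mathcal{P}_\mathrm{LB\text{-}MILP}$ optimum equals the MINLP optimum, so $\mathrm{LB}\ge\mathrm{UB}$ and the while loop terminates with $y_{b(k)}$ globally optimal. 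The main obstacle I expect is the careful accounting at the boundary case when $\mathrm{LB}=J(y_{b(k)})$, where the reduced right-hand side $\bar J_k$ may fail to exclude the current incumbent; handling this requires showing that, in that case, $y_{b(k)}$ is already certified optimal by $\mathcal{P}_\mathrm{LB\text{-}MILP}$ and the termination condition fires on the same iteration, which follows from the lower bound inequality together with $\mathrm{UB}=J(y_{b(k)})$. \qed
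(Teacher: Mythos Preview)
Your overall skeleton---non-repetition in $\mathcal{P}_\mathrm{BR\text{-}MIQP}$ via the reduced right-hand side and the infeasibility cuts, validity of $\mathcal{P}_\mathrm{LB\text{-}MILP}$ as a global lower bound, and finiteness of $Y$---is the same as the paper's. One step needs repair, however. You assert that the fallback $\mathcal{P}_\mathrm{LB\text{-}MILP}$ ``returns a new integer $\tilde y\in Y$ not yet visited (again by the Benders cuts on the $y_i\in\setT_k$ \ldots)''. This conflates the two roles the Benders inequalities play: in $\mathcal{P}_\mathrm{BR\text{-}MIQP}$ they appear with the reduced right-hand side $\bar J_k$ and therefore \emph{exclude} visited points, but in $\mathcal{P}_\mathrm{LB\text{-}MILP}$ they are epigraph constraints $\eta\ge J(y_i)+\nabla J(y_i)^\top(y-y_i)$ and do not remove any $y_i$ from the feasible set. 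Your trichotomy (infeasible / gap closes / fresh point) is nevertheless correct, but for a different reason: if $\mathcal{P}_\mathrm{LB\text{-}MILP}$ were minimised at a previously visited feasible $y_i$, the $i$-th cut would force $V_\mathrm{MILP}\ge J(y_i)\ge\mathrm{UB}$, while the underapproximator property gives $V_\mathrm{MILP}\le J^\star\le\mathrm{UB}$, so $\mathrm{LB}=\mathrm{UB}$ and the loop exits. You should make this argument explicit rather than appealing to the exclusion mechanism from the MIQP step.

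The paper closes the argument somewhat differently. Instead of driving the iterates to exhaust $Y$ and then invoking Lemma~\ref{lemma: equiv LB-MILP and MINLP} (whose hypothesis is precisely that every feasible integer point has been visited), it argues directly: once the optimal $y^\star$ has been evaluated, the outer-approximation constraints \eqref{cns: LB-MILP g_L OA}--\eqref{cns: LB-MILP h_L OA} together with the lack of a descent direction in $x$ at $x^\star$ force $\eta\ge f(x^\star,y^\star)=\mathrm{UB}$, hence termination; and a short contradiction shows that termination at a non-optimal incumbent would violate the underapproximator property. The paper's route therefore does not need full exhaustion of $Y$, whereas your route does; both are valid, but your reliance on Lemma~\ref{lemma: equiv LB-MILP and MINLP} is heavier than necessary.
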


\begin{proof}
        \andrea{
        We first show that no integer assignment is repeated.
        If $\prob_\NLP$ is infeasible at $y_k$, then the infeasibility cut $(y_k-\bar y_k)^\top(y-\bar y_k)\le 0$ excludes $y_k$ from subsequent master problems.
        If $\prob_\NLP$ is feasible and $\mathcal P_\mathrm{BR-MIQP}$ returns a new integer solution, then by construction $\tilde y \ne y_k$.
        If $\mathcal P_\mathrm{BR-MIQP}$ is infeasible or stagnates, the algorithm solves $\mathcal P_\mathrm{LB-MILP}$.
        If $\mathcal P_\mathrm{LB-MILP}$ returns $\tilde y = y_k$, then the stopping condition $\LB \ge \UB$ is satisfied and the algorithm terminates.
        Hence, no integer assignment is revisited.
        Since $Y$ is finite by Assumption~\ref{ass: cont and diff fn, integer finiteness}, finite termination follows.}

        \andrea{
        We now prove correctness under convexity.
        Under Assumption~\ref{ass: convexity} and Lemma~\ref{lemma: lower bounds}, the linearizations of $f$, $g$, and $h$ are global underestimators.
        Therefore, $\mathcal P_\mathrm{LB-MILP}$ is an outer approximation of $\mathcal P_\mathrm{MINLP}$.
        Consequently, every feasible solution of $\mathcal P_\mathrm{MINLP}$ remains feasible for $\mathcal P_\mathrm{LB-MILP}$.
        }
        Let $(x^\star, y^\star)$ be the global optimal solution of $\mathcal{P}_\mathrm{MINLP}$ with objective $f^\star$, hence $\prob_\NLP$ is feasible for $J(y^\star)$ and $\UB = J(y^\star) = f(x^\star, y^\star)$.
        \andrea{
                Since $(x^\star,y^\star)$ is feasible for $\mathcal P_\mathrm{MINLP}$, it is also feasible for $\mathcal P_\mathrm{LB-MILP}$ by the outer approximation property.
        Evaluating the epigraph constraints of $\mathcal P_\mathrm{LB-MILP}$ at $(x^\star,y^\star)$ yields}
        \begin{subequations}
        \begin{align}
                        \eta &\geq f(x^\star, y^\star) + \nabla f(x^\star, y^\star)^\top \left(\begin{matrix}
                                \tilde{x} - x^\star \\ y^\star - y^\star
                        \end{matrix}\right), \label{eq: proof lower bounds oa objective cut} \\
                        \eta &\geq J(y_i) + \nabla J(y_i)^\top (y^\star - y_i), \quad i \in \setT_k \andrea{\setminus \{b(k)\}}. \label{eq: proof lower bounds benders cut}
                \end{align}
        \end{subequations}
        From \eqref{eq: proof lower bounds oa objective cut}, we know that no valid descent direction exists along $x$ at $x^\star$ (cf. Assumption \ref{ass: constraint qualification}), therefore
        \[
                \nabla f(x^\star, y^\star)^\top \left(\begin{matrix}
                        \tilde{x} - x^\star \\ 0
                \end{matrix}\right) \geq 0.
        \]
        \andrea{By convexity of $J$, each Benders cut \eqref{eq: proof lower bounds benders cut} satisfies
        \[
                J(y_i) + \nabla J(y_i)^\top (y^\star - y_i) \le J(y^\star) = f(x^\star, y^\star) = \UB, \quad \text{for all } i \in \setT_k \setminus \{b(k)\},
        \]
        }
        it follows
        \[
                \LB = \eta \geq f(x^\star, y^\star) + \nabla f(x^\star, y^\star)^\top \left(\begin{matrix}
                        \tilde{x} - x^\star \\ 0
                \end{matrix}\right) \geq \UB,
        \]
        thus Algorithm~\ref{alg: benders region s-miqp} terminates.

        Assume Algorithm~\ref{alg: benders region s-miqp} terminates at a point $(x',y')$ which is not the global optimum, hence with objective value $J(y')$ \andrea{ strictly greater than $f(x^\star,y^\star)$.
        Termination implies $\LB \ge \UB$.
        Since $\UB = J(y')$, we have $\LB \ge J(y') > f(x^\star,y^\star)$.
        However, $(x^\star,y^\star)$ is feasible for $\mathcal P_\mathrm{LB-MILP}$, so $\LB \le f(x^\star,y^\star)$.
        This is a contradiction.
        }

        Algorithm~\ref{alg: benders region s-miqp} classifies $\mathcal{P}_\mathrm{MINLP}$ as infeasible when every $y \in Y$ has proved to make $\prob_\NLP$ infeasible.
        \andrea{
        By definition of $\F$, this implies $\F = \emptyset$.
        Hence, $\prob_\MINLP$ is infeasible.
        Moreover, for each $y \in Y$ an infeasibility cut excludes $y$ from the feasible set of $\mathcal{P}_\mathrm{LB-MILP}$.
        Since $Y$ is finite, after all integer points have been processed, no feasible integer assignment remains.
        Therefore $\mathcal P_\mathrm{LB-MILP}$ becomes infeasible and the Algorithm~\ref{alg: benders region s-miqp} terminates with $\LB = \UB = +\infty$.
        }
\end{proof}

\subsection{Tutorial example for S-B-MIQP}
We illustrate the behavior of Algorithm \ref{alg: benders region s-miqp} for a MINLP where we can have an effective graphical representation. The example is taken from \citep{Ghezzi2023a}.
Consider the following convex~MINLP
\begin{mini!}
    {\substack{x \in \mathbb{R}, \\(y_{[1]}, y_{[2]}) \in \mathbb{Z}^2}}{(y_{[1]} - 4.1)^2 + (y_{[2]} - 4.0)^2 + \lambda x}
    {\label{op: tutorial MINLP}}{\label{op: cost function of tutorial MINLP}}
	\addConstraint{y_{[1]}^2 + y_{[2]}^2 - r^2 - x}{\leq 0}{}\label{op: slacked cns}
	\addConstraint{-x}{\leq 0},
\end{mini!}
where $r = 3$ and $\lambda = 1000$.
The term $\lambda x$ in \eqref{op: cost function of tutorial MINLP} can be seen as a penalization of the violation of the quadratic constraint $y_{[1]}^2 + y_{[2]}^2 - r^2 \leq 0$.
The shape of the cost function and the global minimizer of \eqref{op: tutorial MINLP} are represented in Figure \ref{fig: contour nlp}.
The global minimizer can be found graphically and corresponds to $(x, y_{[1]}, y_{[2]}, x) = (0, 2, 2)$.
Note that $x$ is determined by $x = \max(0, y_{[1]}^2 + y_{[2]}^2 - r^2)$.
\begin{figure}
    \centering
    \includegraphics[]{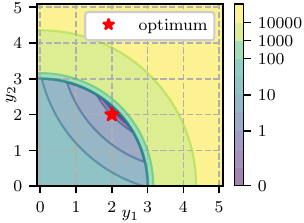}
    \caption{Level lines of the cost function of problem \eqref{op: tutorial MINLP}. The red asterisk denotes the global minimizer.}~\label{fig: contour nlp}
\end{figure}
Given a linearization point $(\bar{x}, \bar{y}_{[1]}, \bar{y}_{[2]})$, one can compute the corresponding master problems, $\prob_{\mathrm{BR-MIQP}}$ and $\prob_{\mathrm{LB-MILP}}$.
Since the original cost is convex and quadratic, one can choose $B$ as the exact Hessian of \eqref{op: cost function of tutorial MINLP}.
The quadratic constraint \eqref{op: slacked cns} is linearized to fit the MILP/MIQP approximation as follows
\begin{align} \label{cns: linearized cns}
	- \bar{y}_{[1]}^2 - \bar{y}_{[2]}^2 + 2 y_{[1]} \bar{y}_{[1]} + 2 y_{[2]} \bar{y}_{[2]} - x \leq r^2.
\end{align}
Also, for the MILP/MIQP approximation $x$ is implicitly defined as $x = \max(0, -\bar{y}_{[1]}^2 - \bar{y}_{[2]}^2 + 2 y_{[1]} \bar{y}_{[1]} + 2 y_{[2]} \bar{y}_{[2]} - r^2)$.
For this reason, in the following, we focus only on the values of $y_{[1]}, y_{[2]}$.
We stack $y_{[1]}, y_{[2]}$ in the vector $y_k$ as $y_k \coloneqq (y_{[1]}, y_{[2]})$ where the subscript $k$ denotes the iteration number of Algorithm~\ref{alg: benders region s-miqp}.

We apply Algorithm~\ref{alg: benders region s-miqp} to solve \eqref{op: tutorial MINLP}, with hyper-parameter $\alpha = 0.9$ for the reduced right-hand-side \eqref{eq: reduced rhs J_bar}.
Table \ref{tab: tutorial example iteration} reports the results of each iteration, and Figure \ref{fig: illustrative example} gives a graphical interpretation.
\begin{table}
    \centering
    \ra{1.2}
    \caption{Iterations of Algorithm \ref{alg: benders region s-miqp} for problem \eqref{op: tutorial MINLP}}
    \begin{tabular}{@{}lcccccc@{}} \toprule
        $k$     & $\LB$ & $\UB$         & $b(k)$        & $y_k$         & $J(y_k)$      & $V_k$ \\ \midrule
        0       & 7.44  & 7016.81       & 0             & (0, 4)        & 7016.81       & -    \\
        1       & 7.44  & 7016.81       & 0             & (4, 3)        & 16001.01      & 1.01    \\
        2       & 7.44  & 4005.21       & 2             & (3, 2)        & 4005.21       & 5.21    \\
        3       & 8.41  & 8.41          & 3             & (2, 2)        & 8.41       & 8.41    \\

        \bottomrule
    \end{tabular}\label{tab: tutorial example iteration}
\end{table}

\begin{figure}
    \centering
    \includegraphics[width=0.95\columnwidth]{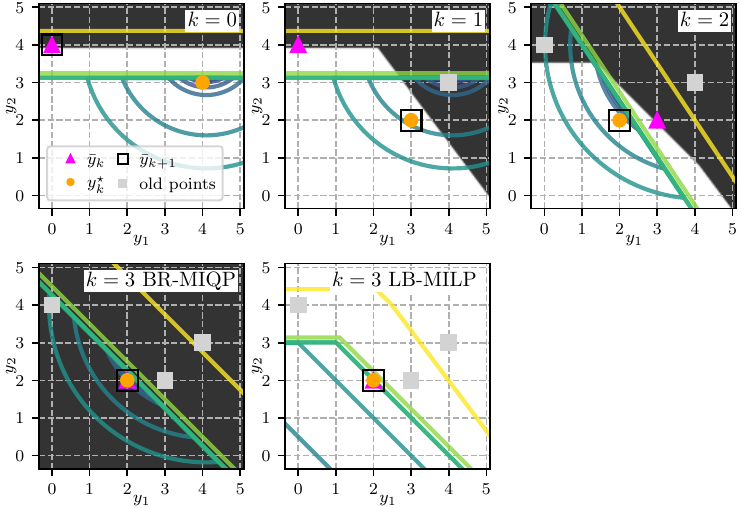}
    \caption{
        Representation of algorithm iterations.
	The level lines correspond to either $\prob_\mathrm{BR-MIQP}$ or $\prob_\mathrm{LB-MILP}$, according to the iteration. The darkened areas describe the regions excluded by the Benders regions $\mathbb{B}_k$.
	}~\label{fig: illustrative example}
    \vspace*{-0.3cm}
\end{figure}

The problem is initialized with $y_0 = (0, 4)$, and the lower bound computed via MINLP relaxation is $\LB = 7.44$.
In the first iteration, the initial point $y_0$ is evaluated by solving $J(y_0)$.
Since the problem is feasible, its objective value updates the initial upper bound, $\UB = 7016.81$.
Then, we compute the Benders region $\B_0$ and we solve $\prob_\mathrm{BR-MIQP}$ with linearization point $(x_0, y_0)$ and Hessian $B_0$.
Since $\prob_\mathrm{BR-MIQP}$ is feasible, we store its objective in $V_1$ and its solution in $y_{[1]}$, hence $V_1 = 1.01$ and $y_{[1]} = (4, 3)$.
The next two iterations of Algorithm \ref{alg: benders region s-miqp}, i.e., $k=1, 2$ have a similar structure to the first iteration.
We move our focus to the fourth iteration where $y_3 = (2,2)$, and the associated $J(y_3)$ objective is $8.41$, then $\UB$ is updated.
Also, the Benders region $\B_3$ is computed, and its intersection with the integer feasible set is empty.
Therefore, we switch to the solution of the associated $\prob_\MILP$, whose minimizer is $y_{[2]} = (2, 2)$, and the objective is $V_\mathrm{MILP} = 8.41$.
The lower bound $\LB$ is updated and, finally, the termination condition of Algorithm~\ref{alg: benders region s-miqp} is met since $\UB = \LB$.

\section{Extension to the nonconvex case}\label{sec: extension to nonconvex case}

Under the convexity Assumption \ref{ass: convexity}, Theorem \ref{th: algorithm convergence} guarantees termination of Algorithm \ref{alg: benders region s-miqp} either to the global optimal solution of the convex MINLP or with a certificate of infeasibility.
However, when we deal with nonconvex \acp{MINLP}~\eqref{op: generic MINLP}, for which we only require Assumptions~\ref{ass: cont and diff fn, integer finiteness} and \ref{ass: constraint qualification} to hold, the termination condition of Algorithm~\ref{alg: benders region s-miqp} might be triggered in \andrea{unintended} situations.
Due to nonconvexities, the cutting planes based on $J_\LB$ are not guaranteed to be lower bounds for the global optimum or for the current best solution.

We delineate two premature termination scenarios.
The first scenario occurs when the solution of $\prob_\MILP$ yields a new point $(x_k, y_k)$ with a value exceeding the current $\UB$.
This situation is inherently ambiguous because we halt the algorithm due to a newfound minimizer that we acknowledge to be inferior to the best point found during the algorithm's iterations.
The second scenario arises from infeasibility cutting planes, causing $\prob_\MILP$ to become infeasible.
Here, we set $\LB$ to $+\infty$, triggering Algorithm~\ref{alg: benders region s-miqp} termination.
This second scenario is particularly misleading as it encroaches upon the condition $\LB=+\infty$, typically reserved for detecting infeasibility in the original problem $\prob_\MINLP$.

We illustrate the first situation of premature termination with the following example.
Consider the nonlinear integer program
\begin{mini}
        {y \in \Z_{[-4, 4]}}{\left( y^2 - 5 \right)^2 + 4 y.}{}{\label{example: benders counterexample}}
\end{mini}
For simplicity, we run Algorithm~\ref{alg: benders region s-miqp} with zero Hessian $B=0$ in $\mathcal{P}_\mathrm{BR-MIQP}$, resulting in an \ac{MILP}.
As shown in Figure \ref{fig: benders deadlock for noncvx minlp}, at the last iteration, the new Benders region $\B_3$ is empty. Hence, we attempt to solve $\prob_\MILP$.
Note that for the latter problem, the cutting planes are not lower bounds of the current best solution $y_{b(3)}=-3$.
The solution of $\prob_\MILP$ is $y=2$, which has a value greater than $\UB$, triggering the termination of Algorithm~\ref{alg: benders region s-miqp}.

The second scenario is illustrated in Figure \ref{fig: infeasibility cut nonconvex MINLP}, where $\Fbar$ is nonconvex, $y_b \in \Fbar$ is the best solution found, and $\hat{y}$ is the infeasible solution that Algorithm~\ref{alg: benders region s-miqp} has computed lastly.
In the attempt to create an infeasibility cut as \eqref{eq: infeasibility cut} for $\prob_{\FNLP}(\hat{y}, y_b)$, we cut a portion of the feasible set that includes $y_b$.
This makes $\LB = \infty$, triggering the termination of Algorithm~\ref{alg: benders region s-miqp}.

\begin{figure}
        \centering
        \includegraphics[width=\columnwidth]{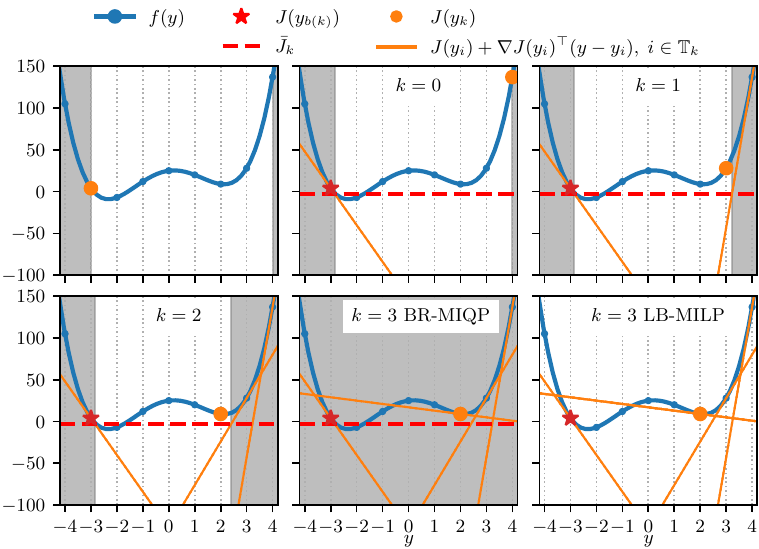}
        \caption{Iterations of Algorithm~\ref{alg: benders region s-miqp} for Problem~\eqref{example: benders counterexample} with $\LB$ equal to the global minimum and $\alpha=0.5$, starting from $y_0=-3$. The shaded dark areas correspond to the area excluded from the feasible set by the given Benders region $\B_k$. The first plot depicts the initial condition of Algorithm~\ref{alg: benders region s-miqp}, and the last plot illustrates its termination, highlighting the first scenario of premature termination where the solution of $\prob_\MILP$ has value higher than the current $\UB$.}\label{fig: benders deadlock for noncvx minlp}
\end{figure}

\begin{figure}
        \centering
        \includegraphics[width=0.5\columnwidth]{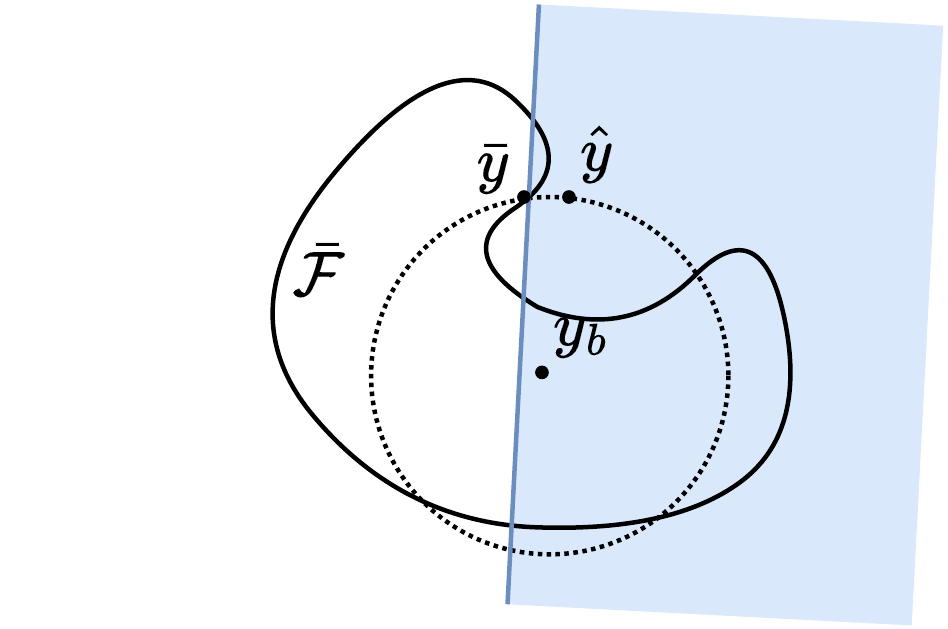}
        \caption{Termination condition triggered by infeasibility cuts. The relaxed integer feasible set $\Fbar$ corresponds to the interior of the solid black line, $y_b \in \F$ is the current best and feasible solution, $\hat{y} \in \Z^{n_y}$ is infeasible, and $\bar{y}$ is the solution of \eqref{op: FNLP}. The infeasibility cut in $\bar{y}$ makes $y_b$ infeasible and triggers the termination condition.}\label{fig: infeasibility cut nonconvex MINLP}
\end{figure}

\andrea{In what follows, we introduce heuristic safeguarding strategies that aim to prevent premature termination in the nonconvex case, without restoring the
global validity guarantees enjoyed under convexity.}
First, we present a procedure to correct the gradient of the linear models, then we show a way to enlarge the Benders region.

\subsection{Gradient correction}\label{subsec: gradient correction strategy 1}
\andrea{
During the computation of the underestimator $J_{\LB}$ at the $k$-th iteration of Algorithm~\ref{alg: benders region s-miqp}, linearizations of the value function are constructed using vectors $\nabla J(y_i) \in \partial_C J(y_i)$ obtained from auxiliary problems at points $y_i, i \in \setT_k$.
}
\andrea{
In contrast to the convex case, Clarke subgradients of the generally nonconvex value function $J$ provide only local first-order sensitivity information and do not, in general, define globally valid Benders cuts.
As a consequence, the linearization induced by $\nabla J(y_i)$ may overestimate the value function at other points, including the current best incumbent solution $y_{b(k)}$.
}
Specifically, it may occur that for some $i \in \setT_k$,
\begin{equation}\label{eq: gradient correction issue}
J(y_i) + \nabla  J(y_i)^\top (y_{b(k)} - y_i) > J(y_{b(k)}),
\end{equation}
in which case the corresponding linearization is not a valid Benders cut and may prematurely terminate Algorithm~\ref{alg: benders region s-miqp}.

\andrea{To ensure that the underestimator remains valid, we replace such local linearizations by corrected cutting planes that underestimate the value function at the incumbent solution.
}
Specifically, we require the cutting plane associated with $y_i$ to satisfy
\begin{equation}
\label{eq: BTR correct inequality}
J(y_i) + \tilde g^\top (y_{b(k)} - y_i) \le J(y_{b(k)}),
\end{equation}
where $\tilde g$ denotes a cut-generating vector.

The set of admissible cut-generating vectors is therefore defined as
\begin{equation}
\mathbb{G}_{(i,k)} \coloneqq \left\{ \tilde g \;\middle|\; J(y_i) + \tilde g^\top (y_{b(k)} - y_i) \le J(y_{b(k)}) \right\}.
\end{equation}
\andrea{Elements of $\mathbb{G}_{(i,k)}$ are not required to belong to the Clarke subdifferential of $J$.
}

\andrea{Among all admissible cut-generating vectors, we select the one that minimally deviates from the local sensitivity vector $\nabla J(y_i)$} in a weighted Euclidean norm with symmetric positive definite weight matrix $W$.
This leads to the optimization problem
\begin{argmini}
{g \in \mathbb{G}_{(i,k)}}
{\frac{1}{2}\Vert g - \nabla J(y_i) \Vert_W^2}
{\label{op: minimal gradient correction}}
{g^\mathrm{corr}_{(i,k)} \in}
\end{argmini}
By default, the identity matrix is used as weight matrix $W$.

\begin{lemma}
\label{lemma: grad corr computation}
Problem~\eqref{op: minimal gradient correction} admits a closed-form solution.
Let $\Delta y_{(i,k)} = y_{b(k)} - y_i$.
Let $r_{(i,k)} = J(y_{b(k)}) - J(y_i) - \nabla J(y_i)^\top \Delta y_{(i,k)}$.
Then the corrected cut-generating vector is given by
\begin{align}
g^\mathrm{corr}_{(i,k)} =
\nabla J(y_i)
+
\begin{cases}
0,
& \text{if } r_{(i,k)} \ge 0, \\[0.6em]
\displaystyle
\frac{r_{(i,k)}}{\Delta y_{(i,k)}^\top W^{-1} \Delta y_{(i,k)}}
W^{-1} \Delta y_{(i,k)},
& \text{if } r_{(i,k)} < 0.
\end{cases}
\end{align}
\end{lemma}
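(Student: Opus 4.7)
The plan is to observe that Problem \eqref{op: minimal gradient correction} is a strictly convex quadratic program with a single linear inequality constraint, since $W$ is symmetric positive definite and $\mathbb{G}_{(i,k)}$ is a closed halfspace. Hence the minimizer exists, is unique, and is fully characterized by the KKT conditions. I would then split the analysis into two cases according to whether the unperturbed gradient $\nabla J(y_i)$ already lies in $\mathbb{G}_{(i,k)}$.

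First, I would rewrite the constraint in the more convenient form $g^\top \Delta y_{(i,k)} \leq J(y_{b(k)}) - J(y_i)$, and note that, by definition of $r_{(i,k)}$, the unperturbed gradient satisfies the constraint with slack $r_{(i,k)}$. In the case $r_{(i,k)} \geq 0$ the point $g = \nabla J(y_i)$ is feasible and obviously minimizes the nonnegative objective (which vanishes there), giving the first branch of the formula.

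In the case $r_{(i,k)} < 0$ the unperturbed gradient is infeasible, so by continuity of the objective and the constraint, the optimum must lie on the boundary, i.e., the inequality is active at the minimizer. I would then write the Lagrangian $L(g, \lambda) = \tfrac{1}{2}\|g - \nabla J(y_i)\|_W^2 + \lambda\bigl(g^\top \Delta y_{(i,k)} - (J(y_{b(k)}) - J(y_i))\bigr)$ with $\lambda \geq 0$, and impose stationarity $W(g - \nabla J(y_i)) + \lambda \Delta y_{(i,k)} = 0$, which yields $g = \nabla J(y_i) - \lambda W^{-1}\Delta y_{(i,k)}$. Substituting this expression into the active constraint gives a scalar equation for $\lambda$ whose solution is $\lambda = -r_{(i,k)} / \bigl(\Delta y_{(i,k)}^\top W^{-1} \Delta y_{(i,k)}\bigr)$, which is well defined (denominator strictly positive since $W^{-1}$ is positive definite and $\Delta y_{(i,k)} \neq 0$ whenever $r_{(i,k)} < 0$) and indeed nonnegative because $r_{(i,k)} < 0$, so dual feasibility holds. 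Plugging $\lambda$ back into the stationarity expression produces the closed-form in the second branch.

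Finally, I would combine the two branches to recover the claimed formula and note that the KKT conditions are sufficient here because of strict convexity, so no second-order check is required. The only mild subtlety I anticipate is the edge case $\Delta y_{(i,k)} = 0$: if $y_{b(k)} = y_i$ the constraint becomes $0 \leq J(y_{b(k)}) - J(y_i) = 0$, which is trivially satisfied so the $r_{(i,k)} \geq 0$ branch applies and no correction is needed; this is consistent with the stated expression, which is only invoked when $r_{(i,k)} < 0$ and thus when $\Delta y_{(i,k)} \neq 0$.
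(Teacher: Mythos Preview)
Your proposal is correct and follows essentially the same approach as the paper: both reduce to the KKT conditions of a strictly convex QP with a single linear constraint, solve the stationarity condition for $g$ in terms of the multiplier, and back-substitute into the constraint to obtain $\lambda$ and hence the closed form. If anything, your treatment is slightly more careful than the paper's, which passes directly to an equality-constrained reformulation without explicitly arguing (as you do) that the constraint must be active when $r_{(i,k)}<0$, and does not discuss the $\Delta y_{(i,k)}=0$ edge case.
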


\begin{proof}
        Let us write problem \eqref{op: minimal gradient correction} in the following form
        \begin{mini}
                {\Delta g}{\frac{1}{2} \Vert{\Delta g}\Vert_W^2}{}{}
                \addConstraint{\Delta g^\top \Delta y - r }{\leq 0,}
        \end{mini}
        where $\Delta g = g^\mathrm{corr}_{(i,k)} -  \nabla J (y_i)$, $\Delta y = y_{b(k)} - y_i$, and $r = J(y_{b(k)}) - J(y_i) - \nabla J (y_i)^\top \Delta y$.
        The Lagrangian function of the problem is given by
        \begin{equation}
                \mathcal{L}(\Delta g, \lambda) = \frac{1}{2} \Vert{\Delta g}\Vert_W^2 + (\Delta g^\top \Delta y - r) \lambda,
        \end{equation}
        and the \ac{KKT} system is given by
        \begin{align}
                \begin{cases}
                        W \Delta g + \lambda \Delta y & = 0, \\
                        \Delta g^\top \Delta y - r &\leq 0, \\
                        \lambda &\geq 0, \\
                        \lambda (\Delta g^\top \Delta y - r) &= 0.
                \end{cases}
        \end{align}
        In order to solve the system, we distinguish two cases:
        \begin{enumerate}
                \item if the constraint is inactive or weakly active $\Delta g^\top \Delta y - r \leq 0$ and $\lambda = 0$. The optimal solution can be directly obtained from the stationarity condition and $\Delta g = 0$.
                \item if the constraint is strictly active $\Delta g^\top \Delta y - r = 0$ and $\lambda > 0$. From the stationarity condition we obtain $\Delta g = -\lambda W^{-1} \Delta y$. Substituting $\Delta g$ into the primal feasibility condition we get $\lambda = - \frac{r}{\Delta y^\top W^{-1}\Delta y}$. By substitution, we find $\Delta g = \frac{r}{\Delta y^\top W^{-1}\Delta y} W^{-1} \Delta y$.
        \end{enumerate}
\end{proof}
When the best point does not change in the current $k$-th iteration, we only need to check if the new point $y_k$ verifies inequality \eqref{eq: BTR correct inequality} with $\tilde{g} \equiv \nabla J(y_k)$ at the incumbent solution.
If necessary, we correct its gradient.
However, when $y_k$ is the new best point, we must verify if inequality \eqref{eq: BTR correct inequality} holds for all points $y_i, i \in \setT_k$ stored in $\D_k$ and correct the problematic gradients.

\andrea{\begin{remark}
The correction step deliberately sacrifices the subgradient interpretation in order to guarantee global validity of the Benders underestimator at the incumbent solution.
In the convex case, where subgradients yield globally valid cuts, the correction is inactive.
In the nonconvex setting, the procedure can be interpreted as a projection of a local sensitivity vector onto the set of admissible Benders cuts for the incumbent solution.
\end{remark}
}

We denote by ${\cal D}^\mathrm{corr}_k$ the ``corrected'' dataset at iterate $k$, which contains the corrected cut-generating vectors $g^\mathrm{corr}_{(i,k)}$ instead of the local sensitivity vectors $\nabla J(y_i)$.

\begin{lemma}\label{lemma: gradient correction for convex MINLP}
	Under Assumptions~\ref{ass: cont and diff fn, integer finiteness} and~\ref{ass: convexity}, the corrected gradients equal the original ones, i.e., ${\cal D}^\mathrm{corr}_k = {\cal D}_k$.
\end{lemma}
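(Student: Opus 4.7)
The plan is to show that under the convexity assumption, the subgradient inequality $J(y_{b(k)}) \ge J(y_i) + \nabla J(y_i)^\top (y_{b(k)} - y_i)$ holds automatically for every stored index $i$. In the notation of Lemma~\ref{lemma: grad corr computation}, this means $r_{(i,k)} \ge 0$ for all relevant $i$, so the branch of the piecewise definition that returns $\Delta g = 0$ is always the one selected, giving $g^\mathrm{corr}_{(i,k)} = \nabla J(y_i)$ and hence $\mathcal{D}^\mathrm{corr}_k = \mathcal{D}_k$.

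First I would establish that $J: Y \to \mathbb{R} \cup \{+\infty\}$, viewed as the value function of the parametric NLP~\eqref{op: NLP with fixed integers}, is convex on the set of $y$ for which $\mathcal{P}_\mathrm{NLP}(y)$ is feasible. The argument is the standard one: take two feasible $y_1, y_2$ with minimizers $x_1, x_2$, set $y_\lambda = \lambda y_1 + (1-\lambda) y_2$ and $x_\lambda = \lambda x_1 + (1-\lambda) x_2$; then convexity of $X$, affineness of $h$, and convexity of each $g_j$ (all from Assumption~\ref{ass: convexity}) make $(x_\lambda, y_\lambda)$ feasible, and joint convexity of $f$ yields $J(y_\lambda) \le f(x_\lambda,y_\lambda) \le \lambda J(y_1) + (1-\lambda) J(y_2)$.

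Next I would argue that the vector $\nabla J(y_i) = -\mu_{\tilde y}$ extracted from the KKT system in Section~\ref{subsec: auxiliary problem} is a genuine subgradient of the convex function $J$ at $y_i$. Under Assumption~\ref{ass: constraint qualification} this is the classical sensitivity result for convex parametric programs: the multipliers of the parameter-fixing constraint $y - \tilde y = 0$ form an element of $\partial J(\tilde y)$. Combined with the convexity of $J$, the subgradient inequality $J(y) \ge J(y_i) + \nabla J(y_i)^\top (y - y_i)$ then holds for every $y$ in the domain of $J$ — this is the analog of Lemma~\ref{lemma: lower bounds} for the nonsmooth convex case. Since $y_{b(k)} \in \setT_k$ implies $\mathcal{P}_\mathrm{NLP}(y_{b(k)})$ is feasible, we may evaluate the inequality at $y = y_{b(k)}$ for every $i \in \setT_k$, yielding exactly $r_{(i,k)} \ge 0$.

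Finally, for indices $i \in \setS_k$ the stored cut is an infeasibility cut of the form $(y_i - \bar y_i)^\top (y - \bar y_i) \le 0$ rather than a gradient-based supporting hyperplane; Lemma~\ref{lemma: infeasibility convex set} already guarantees that every $y \in \bar{\mathcal{F}}$, and in particular $y_{b(k)} \in \mathcal{F} \subseteq \bar{\mathcal{F}}$, satisfies it, so no correction is ever triggered in this case either. Putting the two cases together gives $\mathcal{D}^\mathrm{corr}_k = \mathcal{D}_k$. The main subtlety, and the only nontrivial technical point, is the justification that $-\mu_{\tilde y}$ is a subgradient of the (possibly nondifferentiable) value function $J$; once that sensitivity fact is invoked, everything else reduces to the convexity inequality and the branch condition in Lemma~\ref{lemma: grad corr computation}.
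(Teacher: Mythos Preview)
Your argument is correct and follows the same route as the paper's proof: both reduce the claim to the convexity of the value function $J$ and the resulting subgradient inequality $J(y_{b(k)}) \ge J(y_i) + \nabla J(y_i)^\top (y_{b(k)} - y_i)$, which forces $r_{(i,k)} \ge 0$ in Lemma~\ref{lemma: grad corr computation}. You simply supply more detail than the paper (explicitly verifying convexity of $J$, noting the subgradient interpretation of $-\mu_{\tilde y}$, and treating the $\setS_k$ case separately), whereas the paper invokes Lemma~\ref{lemma: lower bounds} in one line.
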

\begin{proof}
        This directly follows from Lemma \ref{lemma: lower bounds}.
        Since $J$ is a convex function with convex domain $Y$, \andrea{the Clarke subdifferential coincides with the convex subdifferential.}
        Hence, it holds $J(y_i) + \nabla J(y_i)^\top(y_j - y_i) \leq J(y_j)$, for any $y_i, y_j \in Y$.
\end{proof}

\subsection{Region expansion via gradient amplification}\label{subsec: gradient amplification}
When the gradient correction is computed in a nonconvex situation, we find the minimum correction that ensures that the best point found is outer approximated by every cut, which potentially makes such point the only one feasible for $\prob_\MILP$.
The gradient correction resolves the ambiguous termination described at the beginning of this section, but it can dramatically limit Algorithm~\ref{alg: benders region s-miqp} from further exploring the integer solution space.
For this purpose, we introduce a constant value $\rho \geq 1$, which amplifies all the gradients of the available linear model as follows:
\begin{equation}\label{eq: grad expansion}
        g^\mathrm{ampl}_{(i,k, \rho)}  \coloneqq \rho g^\mathrm{corr}_{(i,k)}.
\end{equation}
The amplification factor $\rho$ is a hyper-parameter of Algorithm~\ref{alg: benders region s-miqp}, which could, for example, be chosen offline and kept fixed at runtime. More elaborate strategies to choose $\rho$ separately per inequality and iteration index are also possible but are beyond our interest in this work.

We denote by ${\cal D}^\mathrm{ampl}_k$ the dataset where the corrected and amplified gradients replace the original gradients.
This set is used as the modified dataset in line \ref{alg:modification} of Algorithm \ref{alg: benders region s-miqp}, i.e., the final version of Algorithm~\ref{alg: benders region s-miqp} sets $\tilde{\cal D}_k \coloneqq  {\cal D}^\mathrm{ampl}_k$.

\begin{figure}
        \centering
        \includegraphics[width=\textwidth]{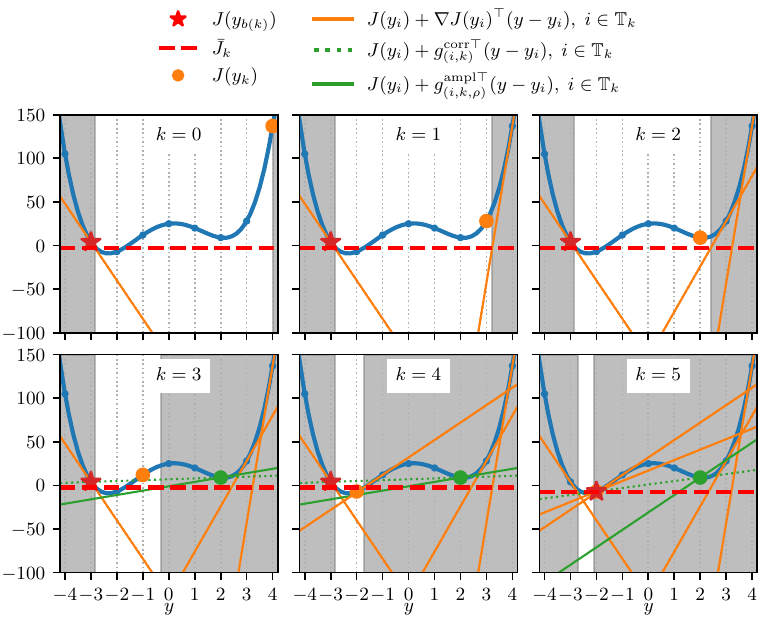}
        \caption{We consider the same problem depicted in Figure \ref{fig: benders deadlock for noncvx minlp} and same settings for Algorithm~\ref{alg: benders region s-miqp}, i.e., hyper-parameter $\alpha=0.5$. Here, we overcome the deadlock via the gradient correction and amplification, choosing $\rho = 5$. The gray area corresponds to infeasible values for the Benders region $\B_k$ constraint.
        }\label{fig: tr exclusion w-fn}
\end{figure}
In Figure \ref{fig: tr exclusion w-fn} we illustrate how the combination of gradient correction and gradient amplification can solve a deadlock situation caused by nonconvexity.
\begin{remark}\label{remark: no amplification for convex MINLP}
        We amplify by factor $\rho$ only the gradients that are corrected. From this, it follows that for convex \acp{MINLP}, there is no amplification, i.e., $\rho = 1$, since there is no gradient correction, cf., Lemma \ref{lemma: gradient correction for convex MINLP}.
\end{remark}

At the end of this section, we demonstrate that incorporating gradient correction and amplification does not compromise the properties of Algorithm \ref{alg: benders region s-miqp}, namely, its finiteness and convergence to a global minimum under Assumption \ref{ass: convexity}.

\subsection{Correction of the infeasibility cuts}
As shown in Figure~\ref{fig: infeasibility cut nonconvex MINLP}, the infeasibility cuts may cause early termination of Algorithm~\ref{alg: benders region s-miqp} by excluding the current feasible and best solution.
To prevent this situation, we perform a cut correction similar to the one introduced in Sec.~\ref{subsec: gradient correction strategy 1}.
Again, we modify the \andrea{normal vector} of the infeasibility cut such that at iteration $k$ the following inequality holds
\begin{equation}\label{eq: corrected infeasibility cuts}
\tilde{n}_{(i, k)}^\T (y_{b(k)} - \bar{y}_i) \leq 0, \quad i \in \setS_k,
\end{equation}
where the corrected \andrea{normal vector} $\tilde{n}_{(i, k)}$ corresponds to the minimal correction of the original \andrea{cut normal} in a weighted Euclidean norm, similarly to \eqref{op: minimal gradient correction}.
Hence,
\begin{argmini}
{n \in \mathbb{N}_{(i,k)} }{\frac{1}{2}\Vert{ n - (\hat{y}_i-\bar{y}_i) }\Vert_W^2}{}{n^\mathrm{corr}_{(i,k)} \in},
\end{argmini}
where $W \succ 0$ and $\mathbb{N}_{(i,k)} \coloneqq \{\tilde{n} \; \vert \; \tilde{n}^\T (y_{b(k)} - \bar{y}_i) \leq 0\}$.
We store the corrected \andrea{cut normals} $n^\mathrm{corr}_{(i,k)}$ in the dataset ${\cal D}^\mathrm{corr}_k$.
We emphasize that we correct the infeasibility cuts exclusively when Algorithm~\ref{alg: benders region s-miqp} has found at least one feasible solution $y_{b(k)} \in \F$.
In case no feasible solution is found, we enforce the infeasibility cuts as defined in \eqref{eq: infeasibility cut}.
Figure~\ref{fig: correction infeasibility cut nonconvex MINLP} illustrates the correction of the infeasibility cut for the scenario depicted earlier in Figure~\ref{fig: infeasibility cut nonconvex MINLP}.

\begin{lemma}\label{lemma: correction for infeasibility cuts in convex MINLP}
Under Assumptions~\ref{ass: cont and diff fn, integer finiteness} and~\ref{ass: convexity}, the corrected \andrea{cut normals} of the infeasibility cuts equal the original ones, i.e., ${\cal D}^\mathrm{corr}_k = {\cal D}_k$.
\end{lemma}
\begin{proof}
The proof follows similarly to Lemma~\ref{lemma: infeasibility convex set}.
\end{proof}

\begin{figure}[h!]
\centering
\includegraphics[width=0.5\columnwidth]{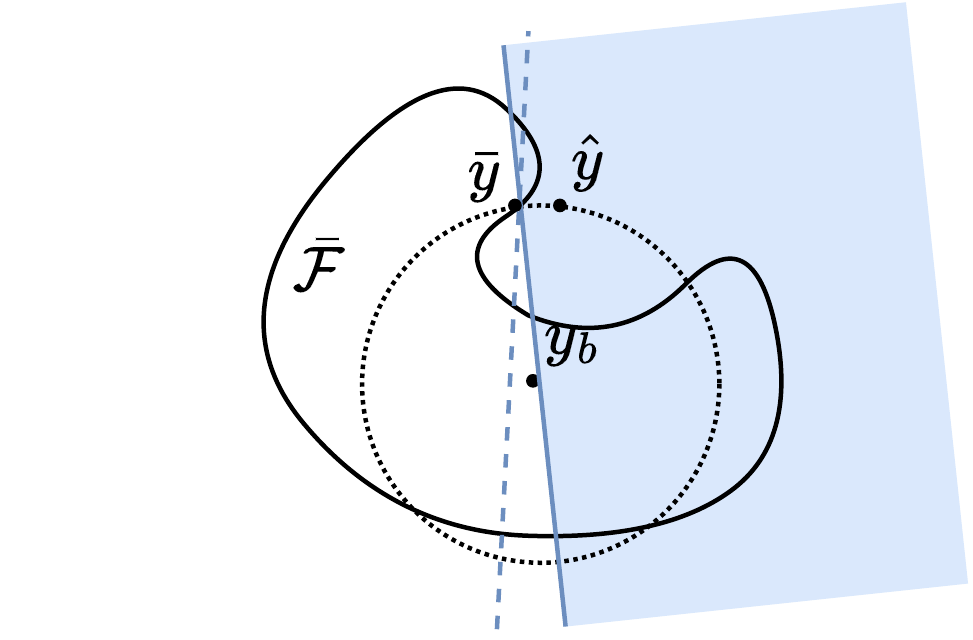}
\caption{Correction of the infeasibility cut of Figure~\ref{fig: infeasibility cut nonconvex MINLP} such that $y_b$ remains feasible.
The corrected cut is depicted solid blue, while the original cut is dashed blue.}
\label{fig: correction infeasibility cut nonconvex MINLP}
\end{figure}

\subsection{Properties of the introduced techniques}

\begin{lemma}\label{lemma: finiteness of sbmiqp with corrections}
If the integer set $Y$ is finite (Assumption~\ref{ass: cont and diff fn, integer finiteness}), Algorithm~\ref{alg: benders region s-miqp} enhanced with gradient correction and amplification procedure for the Benders cuts, and correction of the infeasibility cuts, stops within a finite number of iterations.
\end{lemma}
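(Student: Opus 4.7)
The plan is to extend the finiteness argument from the proof of Theorem~\ref{th: algorithm convergence} by showing that, despite the gradient modifications and the offset in the infeasibility cuts, every non-terminating iteration still produces an integer assignment that has never been visited before. The finiteness of $Y$ from Assumption~\ref{ass: cont and diff fn, integer finiteness} then caps the total number of iterations at $|Y|$.

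For any $y_i$ with $i \in \setT_k$, the Benders cut $J(y_i) + (g^\mathrm{ampl}_{(i,k)})^\top (y - y_i) \leq \bar{J}_k$ evaluated at $y = y_i$ reduces to $J(y_i) \leq \bar{J}_k$, regardless of how the gradient has been corrected or amplified, because the linear term vanishes at the point of linearization. Since $\alpha \in [0,1)$ forces $\bar{J}_k < J(y_{b(k)}) \leq J(y_i)$ whenever $\LB < J(y_{b(k)})$, and the while loop has already terminated when equality holds, no previously feasible iterate is a minimizer of $\mathcal{P}_\mathrm{BR-MIQP}$. This invariance of the cut value at its own linearization point is what makes the correction procedure compatible with the original exclusion argument.

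For any previously infeasible iterate $\hat y$ with associated $\bar y$ obtained from $\mathcal{P}_\FNLP(\hat y, y_b)$, I would verify that the corrected infeasibility cut $(\hat y - \bar y)^\top (y - \bar y) - \sigma \leq 0$ with $\sigma \coloneqq (\hat y - \bar y)^\top (y_b - \bar y)$ still strictly excludes $\hat y$. Expanding the ball constraint $\norm{y_b - \bar y}_2^2 \leq \norm{y_b - \hat y}_2^2$ through the identity $\norm{u}_2^2 - \norm{v}_2^2 = (u - v)^\top (u + v)$ yields $(\hat y - \bar y)^\top (\hat y + \bar y - 2 y_b) \geq 0$, equivalently $\norm{\hat y - \bar y}_2^2 \geq 2\sigma$. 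Evaluated at $y = \hat y$, the left-hand side of the corrected cut is therefore $\norm{\hat y - \bar y}_2^2 - \sigma \geq \sigma \geq 0$, and this inequality is strict because $\hat y \neq \bar y$ (the former is integer-infeasible while the latter lies in $\Fbar$). When no feasible best is available, $\sigma = 0$ and Lemma~\ref{lemma: infeasibility} applies unchanged. This geometric inequality linking the ball constraint of $\mathcal{P}_\FNLP$ to the correction offset is the main technical step of the proof.

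Finally, I would rule out that $\mathcal{P}_\mathrm{LB-MILP}$ returns a previously visited integer point without triggering termination. Infeasible iterates are excluded by the infeasibility cuts above. If a feasible iterate $y_i$ with $i \in \setT_k$ is returned, the cut $\eta \geq J(y_i) + (g^\mathrm{ampl}_{(i,k)})^\top (y - y_i)$ evaluated at $y = y_i$ gives $V_\mathrm{MILP} = \eta \geq J(y_i) \geq J(y_{b(k)}) = \UB$, so the subsequent update $\LB \gets V_\mathrm{MILP}$ immediately triggers $\UB \leq \LB$. Infeasibility of $\mathcal{P}_\mathrm{LB-MILP}$ sets $\LB \gets +\infty$, which also exits the loop. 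Putting the three cases together, every iteration either appends a new element of $Y$ to the visited set or fires the termination condition, so at most $|Y|$ iterations can occur.
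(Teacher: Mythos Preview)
Your proof is correct and follows the same strategy as the paper: verify that the modified cuts still exclude every previously visited integer assignment, so that finiteness of $Y$ caps the iteration count. The paper's own argument is a two-line assertion of this exclusion property with a reference back to Theorem~\ref{th: algorithm convergence}; you supply the details it omits, notably the observation that the Benders cut value at its own linearization point is invariant under any gradient modification, and the geometric step (via the ball constraint in $\mathcal{P}_\FNLP$) showing $\norm{\hat y - \bar y}_2^2 \geq 2\sigma$, which keeps the corrected infeasibility cut strictly violated at~$\hat y$.
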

\begin{proof}
\andrea{Infeasibility cuts~\eqref{eq: corrected infeasibility cuts} preserve the exclusion of previously visited infeasible points whenever a feasible incumbent exists, while ensuring that the current feasible best solution is not excluded}
If a feasible solution is not found, it might happen that an infeasibility cut makes the search space an empty set.
Therefore, $\LB = +\infty$ and Algorithm~\ref{alg: benders region s-miqp} terminates.
If a feasible solution is found, the modified Benders cuts $J(y_i) + \rho \tilde{g}^\top (y - y_i) \leq J(y_{b(k)})$, for all $i \in \setT_k$, guarantee that $y_{b(k)}$ is not excluded from the search space.
Algorithm~\ref{alg: benders region s-miqp} follows the same flow as described in the first part of the proof of Theorem~\ref{th: algorithm convergence}.
Therefore, we focus on the final case.
Specifically, the solution of $\mathcal{P}_\mathrm{LB-MILP}$ has two possible outcomes: a new $\tilde{y} \neq y_k$ or $\tilde{y} \equiv y_{b(k)}$.
For the former case Algorithm~\ref{alg: benders region s-miqp} proceeds to the next iteration.
For the latter case the algorithm directly terminates since $\LB = \UB$.
\end{proof}

\begin{theorem}
Under Assumptions~\ref{ass: cont and diff fn, integer finiteness}, \ref{ass: constraint qualification}, and \ref{ass: convexity}, Algorithm~\ref{alg: benders region s-miqp} enhanced with the gradient correction and amplification procedure for the Benders cuts, and correction of the infeasibility cuts, \andrea{terminates in a finite number of iterations and returns either a global optimal solution of $\mathcal P_\mathrm{MINLP}$ or a certificate of infeasibility.}
\end{theorem}
\begin{proof}
Assumption~\ref{ass: convexity} makes unnecessary the use of gradient correction and amplification for the Benders cuts.
This follows from Lemma~\ref{lemma: gradient correction for convex MINLP} and Remark~\ref{remark: no amplification for convex MINLP}.
Also, the correction of infeasibility cuts is not required as shown in Lemma~\ref{lemma: correction for infeasibility cuts in convex MINLP}.
Therefore, Algorithm~\ref{alg: benders region s-miqp} is unchanged.
Its convergence follows from Theorem~\ref{th: algorithm convergence}.
\end{proof}

Finally, we aim at presenting an additional result for a class of \acp{MINLP} larger than the convex one.

\begin{assumption}\label{ass: set convexity}
We assume that the set $\Fbar$ is convex.
\end{assumption}

Assumption~\ref{ass: set convexity} includes \acp{MINLP} that have nonconvexities in the space of continuous variables but are convex in the space of integer variables.
A practical example of such a case is a nonlinear control system where the discrete controls enter affinely in the dynamics, \andrea{cf. the system dynamics \eqref{eq: ode unstable ocp} of the example in Sec.~\ref{subsec: unstable ocp}, and real-world applications in~\cite{Buerger2021,Robuschi2021}.}
We can show that Lemma~\ref{lemma: infeasibility convex set} and Lemma~\ref{lemma: correction for infeasibility cuts in convex MINLP} also hold in this case since the cuts are constructed in the integer space only.

\begin{lemma}\label{lemma: correction infeasibility cut for convex F}
Under Assumptions~\ref{ass: cont and diff fn, integer finiteness}, \ref{ass: constraint qualification}, and \ref{ass: set convexity}, if $\prob_\NLP$ is infeasible for $\hat{y}$, and if $\bar{y}$ solves $\mathcal{P}_\FNLP(\hat{y}, y_b)$, then inequality $(\hat{y} - \bar{y})^\T (y - \bar{y}) \leq 0$ is valid for all $y \in \Fbar$.
\end{lemma}
\begin{proof}
The proof follows similarly to Lemma~\ref{lemma: infeasibility convex set}.
\end{proof}

\begin{theorem}\label{theorem: infeasibility convexity}
If Assumptions~\ref{ass: cont and diff fn, integer finiteness}, \ref{ass: constraint qualification}, and \ref{ass: set convexity} hold, then Algorithm~\ref{alg: benders region s-miqp} enhanced with gradient correction and amplification procedure for the Benders cuts, and correction of the infeasibility cuts, \andrea{terminates in a finite number of iterations and returns either a feasible solution of $\mathcal P_\mathrm{MINLP}$ or a certificate of infeasibility.}
\end{theorem}
\begin{proof}
The theorem is trivial if Algorithm~\ref{alg: benders region s-miqp} is initialized with a feasible solution.
This follows from Lemma~\ref{lemma: finiteness of sbmiqp with corrections}.
In the other case, $b(k) \notin \setT_k$, the constraint set for $\mathcal{P}_\mathrm{BR-MIQP}$ is given by
\[
\mathbb{A}_k = \{y \in Y \mid (y_i - \bar{y}_i)^\T (y - \bar{y}_i) \leq 0, \; \forall i \in \{0, \dots, k\} \}.
\]
Lemma~\ref{lemma: correction infeasibility cut for convex F} holds for each $(y_i, \bar{y}_i)$.
Therefore, $\F \subseteq \mathbb{A}_k$.
Every visited infeasible point $y_i$ is excluded from $\mathbb{A}_k$.
Therefore, Algorithm~\ref{alg: benders region s-miqp} either terminates with a certificate of infeasibility since $\F = \emptyset$ or with a feasible point $y_k \in \F$.
\end{proof}

\subsection{Final details and options of Algorithm 1}\label{subsec: implementation details}

\paragraph{Algorithm~\ref{alg: benders region s-miqp} with early termination heuristic.} We introduce a heuristic for Algorithm~\ref{alg: benders region s-miqp} which has shown to be particularly effective in finding high-quality solutions in a short time.
The heuristic consists of a different stopping criterion where the lower bound $\LB$ is set equal to $V_{\mathrm{MIQP}}$, the objective of the MIQP, and it is given by
\begin{equation}
        \UB \geq V_{\mathrm{MIQP}}.
        \label{eq:stop_criteria_LB_V}
\end{equation}
When this heuristic is selected, Algorithm~\ref{alg: benders region s-miqp} does not solve any $\prob_\mathrm{LB-MILP}$ and thus no valid lower bound on the MINLP solution is computed.
This algorithm is available in the \texttt{CAMINO} software package\footnote[1]{\url{https://github.com/minlp-toolbox/CAMINO}} with the name \texttt{s-b-miqp-early-exit}, and is denoted in the following as S-B-MIQP-ee.

\paragraph{Inital guess $y_0$.} We obtain the initial guess for the integer variables $y_0 \in Y$ by solving $\prob_\mathrm{BR-MIQP}$, which is constructed by linearizing around the solution of the relaxed $\prob_\NLP$ computed in line 2 of Algorithm~\ref{alg: benders region s-miqp}.
An issue may arise if the constructed $\prob_\mathrm{BR-MIQP}$ is infeasible, however it has never happened in our computations.
If this happens, it is possible to compute an initial guess using one of the heuristic methods available in \texttt{CAMINO}, e.g., the feasibility pump~\cite{Bertacco2007}.
\texttt{CAMINO} allows concatenating solver calls so that the solution computed by the first solver is used as the initial guess for the next one.
For example, to call the feasibility pump (\texttt{fp}) and then use its solution as the initial guess for S-B-MIQP, it is sufficient to specify the solver \texttt{fp+s-b-miqp} in \texttt{CAMINO}.

\andrea{
        \paragraph{Exact Hessian in $\prob_\mathrm{BR-MIQP}$ for nonconvex MINLPs.}
        When solving \emph{nonconvex} MINLPs, the exact Hessian of the Lagrangian of $\prob_\MINLP$ constructed using primal and dual information of the incumbent solution may become indefinite.
        However, we want the Hessian $B_{b(k)}$ used in $\prob_\mathrm{BR-MIQP}$ to be positive semidefinite such that we always solve \emph{convex} MIQPs.
        In the code implementation, we perform an eigenvalue decomposition of the exact Hessian.
        In case negative eigenvalues are detected, we perform a simple Hessian regularization by adding a diagonal matrix weighted by the absolute value of the smallest eigenvalue, ensuring positive semidefiniteness.
        Furthermore, if the smallest eigenvalue is below a threshold (e.g., $10^{-8}$), we neglect the Hessian and revert to a linear objective.
}

\paragraph{Solution pool.} An important option of \ac{MILP}/\ac{MIQP} solvers that we utilize is the \textit{solution pool}, which corresponds to a set of feasible solutions found during the solution process.
\andrea{
        The user defines the maximum dimension of the solution pool $N_\mathrm{sp}$, i.e., how many integer feasible solutions it should contain.
        Usually, the MIP solver operates in a best-effort manner, storing in the solution pool the best integer solution found during the branch-and-cut procedure.
        Thus, if the solution of the MIP is successful the solution pool contain at least one element and at most $N_\mathrm{sp}$.
        Naturally, we have to solve the corresponding $\prob_\NLP$ for each solution stored in the pool.
        }
By means of the \textit{solution pool}, we obtain multiple cuts per \ac{MIP} solved, making Algorithm~\ref{alg: benders region s-miqp} more efficient.

\paragraph{Additional outer approximation cuts.}
The implementation of Algorithm~\ref{alg: benders region s-miqp} within \texttt{CAMINO} includes an option to add standard outer approximation cuts to the master problems~\cite{Fletcher1994}.
Adding these cuts reduces the computation time in the conducted experiments.
Specifically, given an integer solution $y_k$, the implementation adds the outer approximation cut to lower bound the objective function only when $\prob_\NLP$ is feasible, and the cuts correspond to
\begin{equation}\label{eq: oa cuts on objective}
 f(x_k, y_k) + \nabla f(x_k, y_k)^\top \left(\begin{matrix}
        x - x_k \\ y - y_k
 \end{matrix}\right) \leq 0, \quad k \in \setT_k.
\end{equation}
Moreover, the gradient of \eqref{eq: oa cuts on objective} is corrected and amplified in case the best solution has a better objective, i.e., $J(y_{b(k)}) \leq J(y_k) = f(x_k, y_k)$.
The correction and amplification procedure is similar to the one described in Sec.~\ref{subsec: gradient correction strategy 1}-\ref{subsec: gradient amplification}.
Regarding the outer approximation cuts that overapproximate the feasible set, the implementation only adds cuts corresponding to convex constraints.
In the current implementation, there is a mechanism that automatically detects linear constraints, but generic convex constraints should be labeled by the user when the MINLP is formulated.
In each iteration $k \in \setT_k$, the outer approximation cuts are defined as
\begin{equation}
        g_\mathrm{L}^i(x, y; x_k, y_k) \leq 0, \quad i \in \mathbb{C}_g \subseteq \Z_{[1, n_g]},
\end{equation}
where $\mathbb{C}_g$ contains the indices of inequality constraints where $g^i: \R^{n_x} \times \R^{n_y} \to \R$ is convex in $x, y$ jointly.

\section{Numerical results}\label{sec: results}
In this section, we illustrate the performance of Algorithm~\ref{alg: benders region s-miqp} via numerical experiments.
First, we compare the proposed algorithm against three open-source solvers, Bonmin~\citep{Bonami2005}, SCIP~\citep{Achterberg2009,Bestuzheva2023}, and SHOT~\citep{Lundell2022a, Lundell2022b}, and the commercial solver Gurobi~\citep{Gurobi}, on a large number of \acp{MINLP} selected from the MINLPLib~\citep{Bussieck2003a,MINLPLib}.
\andrea{
        SCIP and Gurobi are global solver for nonconvex MINLPs and they implement a spatial branch-and-bound method.
}
Secondly, we consider two \acp{OCP} for a nonlinear system with binary control input: a textbook example from~\citep[\S 8.17]{Rawlings2017} and a real-world case study of a building climate system from~\citep{Buerger2021}.
Since SCIP, SHOT and Gurobi for MINLPs do not have a direct interface with CasADi, comparing it on these \acp{OCP} is nontrivial.
\andrea{For the simpler textbook example, we reformulated the problem in Pyomo~\citep{hart2011pyomo,bynum2021pyomo} to enable calling SCIP, SHOT and Gurobi.
We did not attempt the same for the more complex climate-system case, because CasADi cannot directly output an AMPL description, and reformulating the problem in Pyomo is not trivial.}
All the presented results are obtained on a computer with a Intel(R) Xeon(R) W-2225 CPU @ 4.10GHz processor with 4 cores and 32 GB of memory.
\andrea{
        Since the benchmark on MINLPLib instances takes several hours, we ran the solvers in parallel, in batch of three, to have one core always free preventing overloading.
        To minimize oscillations in compute power, we disabled CPU boost and set the maximum CPU clock to 4GHz, slightly lower than the nominal CPU clock.
}

\subsection{MINLPLib instances}\label{sec: comparison benchmark}
This section compares Algorithm~\ref{alg: benders region s-miqp} (S-B-MIQP) and its variant with early termination (S-B-MIQP-ee) with four existing solvers, SHOT v1.1~\citep{Lundell2022a, Lundell2022b}, Bonmin v1.8~\citep{Bonami2005}, SCIP v9.2.2~\citep{Achterberg2009,Bestuzheva2023}, Gurobi v13.0.0~\citep{Gurobi} on a subset of instances from MINLPLib~\citep{Bussieck2003a,MINLPLib}.
The problem instances are selected according to the following requirements: 1) mixed-integer and mixed-boolean variables with nonlinear constraints and/or objective, \andrea{2) the instances have a nl-file representation compatible with CasADi nl-reader}.
Based on these criteria, \andrea{233} convex MINLP instances and \andrea{263} nonconvex MINLP instances were selected.
\andrea{Point 2) excludes instances using AMPL suffices and some other special syntax, only 7 instances are excluded by this criterion.}
S-B-MIQP, S-B-MIQP-ee, Bonmin are available in the \texttt{CAMINO} software package\footnote[1]{\url{https://github.com/minlp-toolbox/CAMINO}} with the name \texttt{s-b-miqp}, \texttt{s-b-miqp-early-termination} and \texttt{bonmin}, respectively.
\andrea{
        SHOT is called from command line interface, load the MINLPLib instance as a \texttt{.nl} file and solve it.
        SCIP and Gurobi (for MINLPs) are called using the AMPL Python interface which loads the MINLPLib as \texttt{.mod} file.}
For reproducibility the results presented in this section are available in the public repository \texttt{CAMINO-benchmark}\footnote[2]{\url{https://github.com/minlp-toolbox/CAMINO-benchmark}}.

S-B-MIQP, Bonmin and SHOT are configured to use the same NLP-solver, Ipopt 3.14 \citep{Waechter2006} with ma27 \citep{HSL} to have a fair comparison.
The \ac{MIP} subproblems are solved using Gurobi 13.0.1~\citep{Gurobi} on a single thread with a solution pool of dimension 10 for SHOT and 5 for both S-B-MIQP and S-B-MIQP-ee.
\andrea{SCIP and Gurobi utilize their algorithms for MINLPs.}
For each problem, the maximum wall time is 300 seconds.
When the time is over we return the feasible solution with best objective if available, otherwise we consider it as a failure.
The primal solution satisfies a tolerance of $10^{-8}$ on both the objective and constraints.
The MINLP gap for termination is set to $10^{-2}$.
The nl-file of each problem contains an initial guess.
\andrea{
        For both S-B-MIQP algorithms, the initial point $y_0 \in Y$ is generated using the strategy described in Sec.~\ref{subsec: implementation details}.
        Specifically, we take the initial point provided in the nl-file and use it as a warm start for IPOPT to solve the integer relaxation of $\prob_\MINLP$.
        The solution of this relaxation is then used to construct the first $\prob_\mathrm{BR-MIQP}$, whose solution yields $y_0$.
}
The parameters of both S-B-MIQP algorithms are set as follows: the hyper-parameter $\alpha=0.5$, the gradient amplification $\rho=1.5$, the weight matrix $W$ is the identity matrix, and $B$ is the exact Hessian in $\prob_\mathrm{BR-MIQP}$.
\andrea{
        The exact Hessian is constructed by evaluating the second-order derivatives of the cost and nonlinear constraints of $\prob_\MINLP$ at the best solution found, which also include the optimal multipliers corresponding to the nonlinear constraints.
        At the end of this subsection we present a sensitivity analysis for tuning $\alpha$ and $\rho$.
}

We compare the algorithms using performance profiles for the objective value of the solutions and for the wall time.
The wall time is computed in the same way for each algorithm, by starting a clock when the respective algorithm is called and by stopping the clock when the termination condition is met.
For the performance profile of the wall time we used the standard method described in \cite{Dolan2002}.
Consider $q_{p,s}$ the quantity of interest required to solve problem $p \in \mathcal{P}$ by solver $s \in \mathcal{S}$, where $\mathcal{P}$ is the set of problems and $\mathcal{S}$ is the set of solvers.
Then, the performance ratio is defined as
\begin{equation}
        r_{p, s} = \frac{q_{p, s}}{\min\{q_{p, s} \; | \; s \in \mathcal{S}\}},
\end{equation}
in case solver $s$ cannot solve problem $p$, the ratio $r_{p, s} = \infty$.
The performance profile is defined as
\begin{equation}
        \pi_s(\tau) = \frac{\mathrm{card}(p \in \mathcal{P} \; | \; r_{p, s} \leq \tau)}{\mathrm{card}(\mathcal{P})},
\end{equation}
where $\mathrm{card}$ denotes the cardinality of a set.
Thus, $\pi_s(\tau)$ is the probability for solver $s \in \mathcal{S}$ that a performance $r_{p, s}$ is within factor $\tau > 0$ of the best possible ratio.
The function $\pi_s$ is the cumulative distribution function of the performance ratio.

Since the objective value of a problem $p\in \mathcal{P}$ can be negative, in order to compute the performance profile we apply a scaling to obtain only positive numbers,
\begin{equation*}
        \text{if } \; q^* = \min\{q_{p, s} \; | \; s \in \mathcal{S}\} < 0 \quad \text{then } \; \hat{q}_{p, s} = q_{p, s} - q^* + 1, \quad s \in \mathcal{S}.
\end{equation*}

\andrea{
        For the 233 convex MINLPs, we summarize the results in Table \ref{tab: convex minlp stats}.
        We define as ``success'' the instances that reached the desired MINLP gap within time limit, as ``failures'' the instances where a feasible solution is not available at the time limit, and as ``time-out'' the instances where a feasible solution is available but the desired gap is not yet achieved.
        For the latter case, we report the number of instances whose gap is smaller than 0.1, an interval 10 times larger than the sought MINLP gap.
        We report S-B-MIQP-ee at the bottom of Table~\ref{tab: convex minlp stats} since it is a heuristic method even for convex MINLPs, so the status ``success'' does not necessarily correspond to the global minimizer found rather to a feasible solution found.
}
\begin{table}[h!]
        \centering
        \begin{tabular}{l|rrrr}
        \toprule
        Solver & Success & Fail & Time-out & Gap $<10^{-1}$ \\
        \midrule
        Bonmin       & 128 & 66 & 39 & 27  \\
        Gurobi       & 202 & 2 & 29 & 23 \\
        SCIP         & 185 & 2 & 46 & 38 \\
        SHOT         & 212 & 9 & 12 & 10  \\
        S-B-MIQP     & 200 & 7 & 26 & 19 \\
        \midrule
        S-B-MIQP-ee  & 224 & 8 & 1 & 1  \\
        \bottomrule
        \end{tabular}
        \caption{Summary of benchmark for convex MINLPs. \label{tab: convex minlp stats}}
\end{table}

Figure~\ref{fig:benchmark_comparison_conv} depicts the performance profiles of objective value and wall time achieved by the different algorithms on the set of convex MINLPs.
\andrea{
        Regarding wall time, Gurobi and SHOT are the fastest solvers that guarantee to find global optimal solutions.
        S-B-MIQP is about 4 times slower than SHOT.
        S-B-MIQP and SCIP have similar wall-time profiles, but we can claim that S-B-MIQP is slightly after since it achieves 202 success within the available time while SCIP achieves 185 success.
        Nevertheless, SCIP is a bit more robust as it fails only for 2 instances (\texttt{fac2} and \texttt{tls12}) while S-B-MIQP fails for 7 instances (\texttt{ibs2}, \texttt{o7}, \texttt{p\_ball\_30b\_10p\_2d\_h}, \texttt{tls12}, \texttt{tls5}, \texttt{tls6}, \texttt{tls7}).
        SHOT fails for 9 instances, namely \texttt{ibs2} and the \texttt{p\_ball} problems \texttt{10b\_7p\_3d\_h}, \texttt{20b\_5p\_2d\_h}, \texttt{30b\_10p\_2d\_h}, \texttt{30b\_5p\_2d\_h}, \texttt{30b\_5p\_3d\_h}, \texttt{30b\_7p\_2d\_h}, \texttt{40b\_5p\_3d\_h}, and \texttt{40b\_5p\_4d\_h}.
        Gurobi fails for \texttt{p\_ball\_40b\_5p\_4d\_h} and \texttt{tls12}.
        Bonmin solves less problems overall and needs more time for doing it compared to the other algorithms.
        The heuristic algorithm S-B-MIQP-ee is of course faster than S-B-MIQP as it does not solve any $\prob_\mathrm{LB-MILP}$, and on many instances is the second or third fastest algorithm.
        Moreover, S-B-MIQP-ee finds globally optimal solution for about 70\% of the problems (cf. left plot of Figure~\ref{fig:benchmark_comparison_conv}).
        However, this cannot be proven in general.
        Overall SHOT is the fastest open-source algorithm; yet, in the next subsection we show that, once the Python overhead is removed, S-B-MIQP can achieve similar performance.
        }

\andrea{
        We comment on the instances where S-B-MIQP fails.
        Instance \texttt{ibs2} is a relatively large-size problem with few nonlinear constraints having a dense Jacobian, we believe that there is some issue with its nl-file representation, as it fails for all solvers reading nl-files, while it solved successfully using SCIP and Gurobi which read mod-files.
        For \texttt{o7} the solution of $\prob_\mathrm{BR-MIQP}$ is quite slow as the MIQP solver struggles to improve lower bounds.
        By implementing a watchdog on the gap of the MIQP solver, we could abort the computation and return the best feasible solution on the tree, without waiting that the tolerance on the gap of $\prob_\mathrm{BR-MIQP}$ is met.
        In this way S-B-MIQP can proceed, generate more cuts and find the solution.
        We tested this watchdog strategy and we successfully solved these two problems.
        However, the strategy is not included in the main release of \texttt{s-b-miqp}.
        For the instance \texttt{p\_ball\_30b\_10p\_2d\_h}, S-B-MIQP performs six iterations in the given time budget but it in unable to compute a feasible solution.
        The problem might be particularly hard since MINLPLib does not report a valid dual bound.
        For instances \texttt{tls}, S-B-MIQP is stuck in a loop that solves feasibility NLP and add feasibility cuts to $\prob_\mathrm{BR-MIQP}$.
        The infeasibility cuts seem to work properly, since the objective of $\prob_\FNLP$ is decreasing during the iterations.
        We suspect numerical issues in the solution of $\prob_\NLP$, also because once the integer variables are fixed, $\prob_\NLP$ is overconstrained, having more equality constraints than optimization variables.
        We believe that all these failures could be mitigated by implementing some reformulation routines and bound tightening for the MINLP, as done for instance in SHOT.
        Table \ref{tab: appendix convex problems} in the Appendix reports the objective and wall time achieved by each algorithm.
}
\begin{figure}
        \centering
        \includegraphics[width=0.48\textwidth]{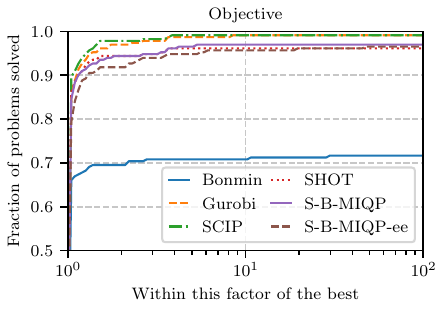}
        \includegraphics[width=0.48\textwidth]{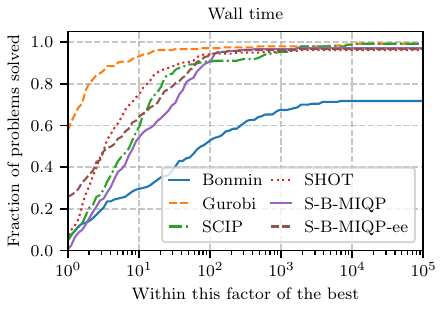}
        \caption{Convex MINLPs. Comparison of Algorithm~\ref{alg: benders region s-miqp} with standard termination (S-B-MIQP) and with early termination (S-B-MIQP-ee), cf. Sec.~\ref{subsec: implementation details}, against Bonmin, Gurobi, SCIP, and SHOT.
        Left: Performance profiles for the objective value achieved by each solver.
        Right: Performance profiles for the wall time of each solver.
        }
        \label{fig:benchmark_comparison_conv}
\end{figure}

\andrea{
        We summarize the results for nonconvex MINLPs in Table \ref{tab: nonconvex minlp stats}.
        Differently from above, all algorithms except SCIP and Gurobi are only heuristics for nonconvex MINLPs.
        Therefore, we classify as ``success'' a feasible solution produced by solver when its termination is triggered.
        We classify as ``fail'' any instance where the solver cannot find a feasible solution within the time limit.
        Finally, the instances counted as ``time-out'' correspond the best feasible solution found by the solver when the time limit is reached.
        Since SCIP and Gurobi are global solver comparing their solution time against the other solver would not be meaningful as we expect that the spatial branch-and-bound procedure they execute to compute valid lower bounds is much more expensive than simply find feasible solutions.
        Therefore, on each instance we limit the available time of SCIP and Gurobi to the computation time achieved by S-B-MIQP.
        In case S-B-MIQP failed for a specific instance we consider the standard time limit of 300 seconds.
        Doing so we obtain a comparison among primal heuristics.
        We remark that the termination conditions of S-B-MIQP, SHOT and Bonmin are unchanged, however for nonconvex MINLPs the computed lower bounds might be wrong.
        Thus, the corresponding termination conditions might be triggered for feasible solutions that are not global minimizers.
}

\begin{table}[h!]
        \centering
        \begin{tabular}{l|rrrr}
                \toprule
                Solver & Success & Fail & Time-out & Gap $<10^{-1}$ \\
                \midrule
                Bonmin      & 133 & 121 & 15  & -  \\
                SHOT        & 128 & 133 & 8  & -  \\
                S-B-MIQP    & 177 & 82 & 10  & -  \\
                S-B-MIQP-ee & 176 & 89 & 4  & -  \\
                \midrule
                Gurobi      & 170 & 73 & 26 & 21 \\
                SCIP        & 155 & 85 & 29 & 22 \\
                \bottomrule
        \end{tabular}
        \caption{Summary of benchmark for nonconvex MINLPs. For SCIP and Gurobi ``success'' correspond to a global optimal solution, for the other solvers not. For SCIP and Gurobi the time limit is set equal to the computation time achieved by S-B-MIQP on the same instance.\label{tab: nonconvex minlp stats}}
\end{table}

\andrea{
        For nonconvex MINLPs, Gurobi solves the largest share of problems, closely followed by S-B-MIQP and SCIP.
        Bonmin and SHOT close the ranking with a percentage of failures close to 50\%.
        }
Figure~\ref{fig:benchmark_comparison_nonconv} depicts the performance profiles of objective value and wall time achieved by the different algorithms on the set of nonconvex MINLPs.
\andrea{
Regarding the objective value, S-B-MIQP, Gurobi, and SCIP find the same solutions for about 40\% of the problems.
For the remaining 30\%, SCIP is slightly dominated.
Bonmin and SHOT perform well only for 25\% of the problems, on the remaining problems they yield worse solutions than the other solvers.
}
\andrea{
Regarding the wall time, Gurobi is the fastest solver.
Despite having set its time limit identical to the one S-B-MIQP, for some instances where S-B-MIQP reach the time limit Gurobi converges to the global minimizer in less time.
Similarly, in some cases Gurobi can find feasible solutions while the other solver fails.
Conversely, the wall-time profile of SCIP is very much overlapped with the one of S-B-MIQP as they share the same time limit.
S-B-MIQP-ee is only marginally better than S-B-MIQP, showing that for the majority of the problem, when $\prob_\mathrm{LB-MILP}$ is solved, it returns a solution with $V_\mathrm{MILP} \ge \UB$, triggering immediate termination of S-B-MIQP.
The wall-time profiles of Bonmin and SHOT are completely dominated by the other solvers.
}
SHOT is particularly slow, suggesting that its algorithm might not be a good heuristic for nonconvex MINLPs.
\andrea{Table \ref{tab: appendix nonconvex problems} in the Appendix reports the objective and wall time achieved by each algorithm.}

\begin{figure}
        \centering
        \includegraphics[width=0.48\textwidth]{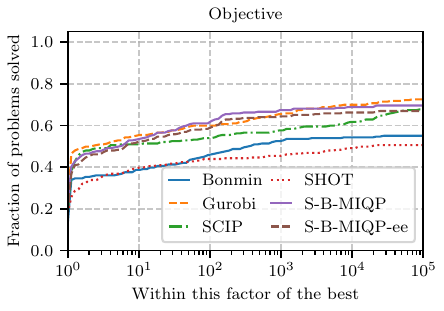}
        \includegraphics[width=0.48\textwidth]{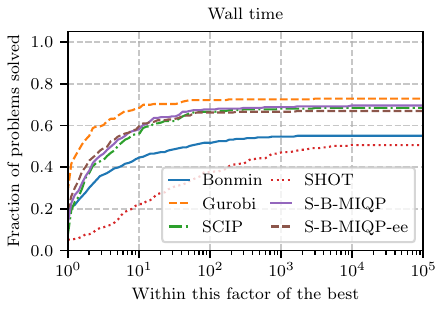}
        \caption{Nonconvex MINLPs. Comparison of Algorithm~\ref{alg: benders region s-miqp} with standard termination (S-B-MIQP) and with early termination (S-B-MIQP-ee), cf. Sec.~\ref{subsec: implementation details}, against Bonmin, Gurobi, SCIP, and SHOT.
        Left: Performance profiles for the objective value achieved by each solver.
        Right: Performance profiles for the wall time of each solver.
        }
        \label{fig:benchmark_comparison_nonconv}
\end{figure}

\subsubsection{Considering only solver time for S-B-MIQP algorithms}
\andrea{
To illustrate the algorithmic potential of S-B-MIQP for solving convex MINLPs, we evaluate the solver time independently of the Python framework overhead.
Recall that \texttt{CAMINO} implements S-B-MIQP using Python for high-level operations, while lower-level solver interfacing relies on CasADi.
In contrast, solvers like SHOT or SCIP are implemented in compiled code and do not incur this Python overhead.
Therefore, in Fig.~\ref{fig:solve time only}, we present a performance profile considering only the wall time spent solving the subproblems; this excludes operations performed in Python, such as cut generation and management.
Comparing Fig.~\ref{fig:solve time only} with the right plot in Fig.~\ref{fig:benchmark_comparison_conv}, we observe that S-B-MIQP performs comparably to SHOT and outperforms SCIP.
Furthermore, the S-B-MIQP-ee heuristic emerges as the second-fastest solver.
These results demonstrate that a compiled implementation of S-B-MIQP would achieve state-of-the-art performance.
Crucially, the algorithm currently operates without the advanced presolving routines standard in SHOT, such as bound tightening, model reformulation, and constraint disaggregation, suggesting that its performance could even improve.
}

\begin{figure}
        \centering
        \includegraphics{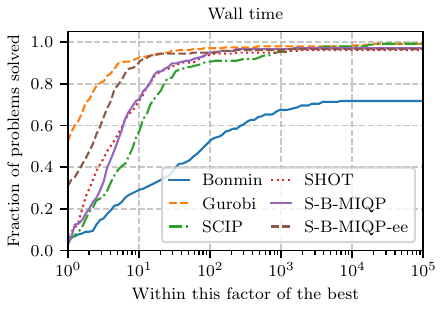}
        \caption{Performance profile for wall time obtained on convex MINLPs. Differently from the right plot of Fig.~\ref{fig:benchmark_comparison_conv}, for S-B-MIQP and S-B-MIQP-ee we considered as part of the wall time only the time spent into the subsolvers, disregarding the time spent in Python operations.
        This figure evidences that the algorithmic logic of S-B-MIQP competes with SHOT, suggesting that an implementation in compiled code would be state-of-the-art.
        }
        \label{fig:solve time only}
\end{figure}

\subsubsection{Sensitivity analysis for tuning $\alpha$ and $\rho$}
\andrea{
        Since the hyper-parameter $\alpha$ used in \eqref{eq: reduced rhs J_bar} enters only the computation of the Benders region in $\prob_\mathrm{BR-MIQP}$, we tune its value by using the algorithm S-B-MIQP-ee.
        If we would use S-B-MIQP, the benefit of choosing different $\alpha$ is less evident since the overall algorithm wall-time is influenced also by the solution of $\prob_\mathrm{LB-MILP}$.
        We compared S-B-MIQP-ee with $\alpha \in \{0.05, 0.25, 0.5, 0.75, 0.95\}$ on convex MINLP instances, reporting performance profiles for both objective value of the solution and wall time.
        As shown in the left plot of Fig.~\ref{fig:tuning alpha}, S-B-MIQP-ee is largely insensitive to $\alpha$ with respect to the objective value, with differences emerging on only about 4\% of the problems.
        Conversely, the choice of $\alpha$ has a strong impact on wall time (see the right plot of Fig.~\ref{fig:tuning alpha}), where lower values of $\alpha$ dominate the performance profile.
        These results align with our expectations: an $\alpha$ close to zero produces a smaller Benders region, forcing $\prob_\mathrm{BR-MIQP}$ to find a solution that is lower bounded by all Benders cuts while having an objective smaller than the reduced right-hand side, cf.~\eqref{eq: reduced rhs J_bar}.
        With small $\alpha$ is more likely than the feasible set of $\prob_\mathrm{BR-MIQP}$ becomes empty, thus triggering the termination of S-B-MIQP-ee.
        We selected $\alpha=0.5$ as the default value for S-B-MIQP algorithms.
        Although $\alpha=0.05$ and $\alpha=0.25$ are marginally faster on average, they failed to solve the \texttt{gams01} instance within the time limit.
}

\begin{figure}
        \centering
        \includegraphics[width=0.48\textwidth]{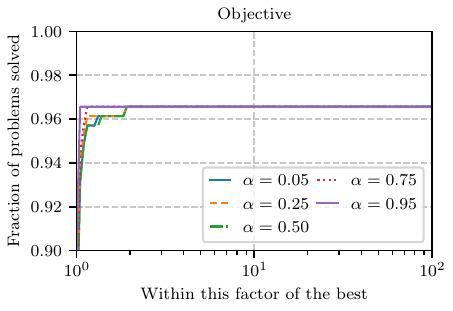}
        \includegraphics[width=0.48\textwidth]{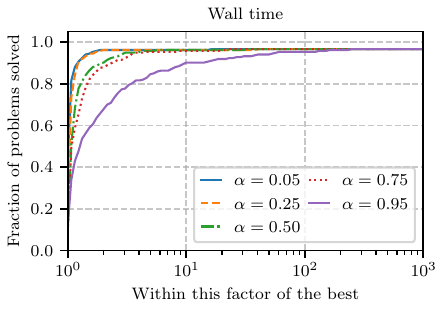}
        \caption{Sensitivity analysis of S-B-MIQP-ee to the hyper-parameter $\alpha$ used to shrink the Benders region of $\prob_\mathrm{BR-MIQP}$ according to \eqref{eq: reduced rhs J_bar}.
        Note the different scales of the axes.
        The considered problems are the 233 convex MINLPs instances selected from MINLPLib.
        }
        \label{fig:tuning alpha}
\end{figure}

\andrea{
        To tune the hyperparameter $\rho$ used to amplify the gradient of the corrected cuts according to \eqref{eq: grad expansion}, we focused on the 187 nonconvex instances where the cut correction routine of S-B-MIQP was invoked.
        Fig.~\ref{fig:tuning rho} depicts the performance profile for objective value and wall time obtained by running S-B-MIQP with $\rho \in \{1, 1.5, 5, 10, 50\}$.
        In terms of objective value, the algorithm converges to the same solution for approximately 98\% of the 187 problems.
        The variant with $\rho=1.5$ achieves slightly better performance.
        Notably, the instance \texttt{mbtd} reaches the time limit for all $\rho > 1.5$.
        Concerning wall time, the plot is dominated by $\rho=1$, which is expected, since without gradient amplification S-B-MIQP terminates as soon as a feasible solution is found.
        Overall, the values $\rho \in \{1.5, 5, 50\}$ perform very similarly, while $\rho=10$ results in slightly longer wall times.
        We selected $\rho=1.5$ as the default value for S-B-MIQP algorithms, as it achieves the best solution quality while remaining competitive in computation time.
}

\begin{figure}
        \centering
        \includegraphics[width=0.48\textwidth]{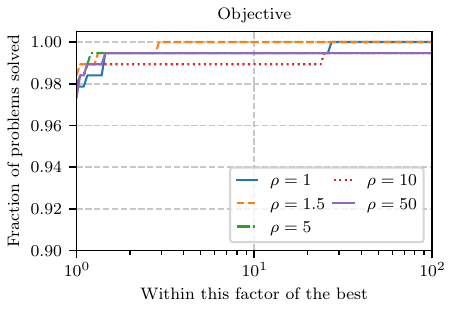}
        \includegraphics[width=0.48\textwidth]{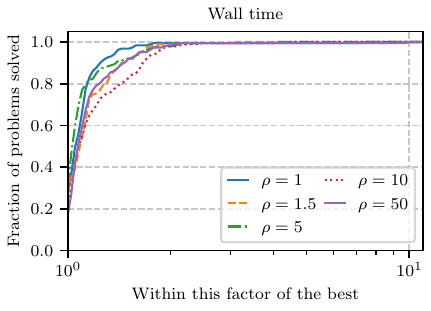}
        \caption{Sensitivity analysis of S-B-MIQP to the hyper-parameter $\rho$ used to amplify the gradient of the corrected cuts according to \eqref{eq: grad expansion}.
        Note the different scales of the axes.
        The considered problems in the analysis are 187 instances among the nonconvex MINLPs where the cut correction routine \eqref{op: minimal gradient correction} is called.
        }
        \label{fig:tuning rho}
\end{figure}

\subsection{MIOC of an unstable nonlinear system}\label{subsec: unstable ocp}
Consider a reference tracking task for a nonlinear unstable system with discrete control input taken from~\citep[\S 8.17]{Rawlings2017}.
The dynamic is described by the \ac{ODE}
\begin{equation}\label{eq: ode unstable ocp}
        \dot{x}(t) = x^3(t) - u(t), \quad t \in [t_0, t_\mathrm{f}]
\end{equation}
where the state is denoted by $x(t) \in \R$ and the control by $u(t) \in \{0, 1\}$.
The control is subject to a minimum dwell time constraint of $0.1\, \mathrm{s}$.
The aim is to track the state reference $x_\mathrm{ref} = 0.7$, starting from the initial state $\bar{x}_0 = 0.9$.
By means of the multiple shooting approach for direct optimal control \citep{Bock1984}, we discretize the \ac{ODE} over a fixed grid with $N=30$ intervals such that $t_0 < t_1 < ... < t_N = t_\mathrm{f}$ adopting a 4th order explicit Runge-Kutta (RK) integrator and a sampling time $t_\mathrm{s}=0.05 \, \mathrm{s}$.
The resulting discretized \ac{OCP} is
\begin{mini}[2]
        {\substack{x_0, u_0, \dots, \\ u_{N-1}, x_N}}{\sum_{k=0}^N (x_k - x_\mathrm{ref})^2}{}{\label{op: unstable ocp}}
        \addConstraint{x_0 =}{\bar{x}_0}
        \addConstraint{x_{k+1}=}{F_\mathrm{RK}(x_k, u_k),}{\quad k=0, \dots, N-1}
        \addConstraint{u_k \in}{\mathcal{U}}{\quad k=0, \dots, N-1,}
\end{mini}
where $\mathcal{U}$ $\coloneqq \{u \in \{0,1\}^{N-1} \; \vert \; u_k \geq u_{k-1} - u_{k-2},\; k=0, \dots, N-1\}$ imposes a minimum uptime for the control inputs of two consecutive time steps.
The required previous values $u_{-1}, u_{-2}$ are set to zero.
Moreover, $x_\mathrm{ref}$ denotes the state reference to track and function $F_\mathrm{RK}$ corresponds to the RK integrator.
Problem \eqref{op: unstable ocp} is a nonconvex MINLP which we solved using different algorithms.
The results are listed in Table~\ref{tab: unstable ocp results}, and Figure~\ref{fig: trajectory unstable ocp} depicts the globally optimal state and control trajectories of~\eqref{op: unstable ocp}.
\andrea{
In Table~\ref{tab: unstable ocp results} we divided the solvers in two sets.
In fact, for Bonmin, \ac{CIA}~\citep{Sager2011a} and S-B-MIQP with Gauss-Newton (GN) Hessian approximation we formulated the problem directly in \texttt{CAMINO} using CasADi.
The native formulation in \texttt{CAMINO} allows us to have more control over the algorithms, for instance, we specified the Hessian approximation to use in $\prob_\mathrm{BR-MIQP}$ of S-B-MIQP.
Also, we can flag dwell-time constraints such that they are dropped from $\prob_\NLP$ of S-B-MIQP.
Doing this improves the solution of the relaxed $\prob_\MINLP$ used to compute the starting point $y_0 \in Y$.
The other solvers, Gurobi, SCIP, and SHOT, are called via AMPL and they load an nl-file representation of \eqref{op: unstable ocp} generated via Pyomo, the model is open-source\footnote[1]{\url{https://github.com/minlp-toolbox/misc/blob/main/pyomo_unstable_ocp.py}}.
We also solve the nl-file representation of \eqref{op: unstable ocp} via S-B-MIQP to show the performance of the proposed algorithm when it is used as a ``black box''.
Indeed, the nl-file does not allow us to specify the same additional information in the model formulation.
Therefore, in this case the exact Hessian is used in $\prob_\mathrm{BR-MIQP}$ and dwell-time constraints will be part of each $\prob_\NLP$.
}

The MINLP gap is set to $10^{-4}$, solvers share the same parameters when possible and multithreading is allowed.
Bonmin ran with its default nonlinear branch-and-bound routine.
S-B-MIQP utilized Gurobi as solver for its MILP/MIQP master problems with solution pool dimension equal 5.
The \ac{CIA} master problem was solved with the tailored branch-and-bound solver pycombina~\citep{Buerger2020a}.
Finally, the two global solvers, SCIP and Gurobi ran their default spatial branch-and-bound algorithm.
For further implementation details, we refer the reader to the collection of problems in the \texttt{CAMINO} repository.
We noticed that both Bonmin and S-B-MIQP with GN Hessian found the global optimum reported in~\citep[\S 8.17]{Rawlings2017}, while the specialized algorithm \ac{CIA} returned a slightly suboptimal solution but in only 0.029 seconds.
Interestingly, S-B-MIQP with GN Hessian returns the solution in only 0.35 seconds while Bonmin takes more than 10 seconds.
The two global solvers SCIP and Gurobi find the global optimum taking few seconds.
Similar time is required by SHOT and S-B-MIQP with exact Hessian but they found suboptimal solutions even worse than CIA.
Bonmin's nonlinear branch-and-bound was the slowest method, taking more than 10 seconds.

\begin{figure}[h!]
        \centering
        \includegraphics[width=0.5\columnwidth]{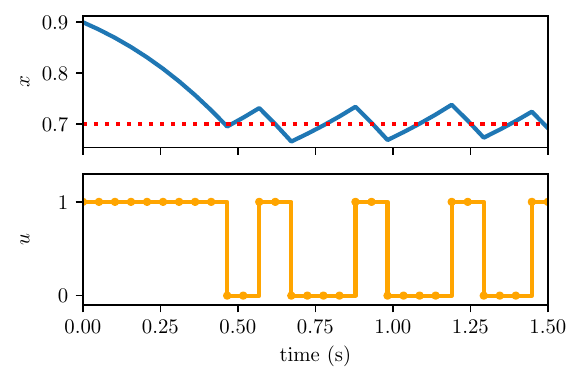}
        \caption{Optimal state and control trajectories of \eqref{op: unstable ocp}.}
        \label{fig: trajectory unstable ocp}
\end{figure}
\begin{table}[h!]
        \centering
        \begin{tabular}{l|rr} \toprule
                Algorithm & Objective & Runtime [s] \\ \midrule
                \multicolumn{3}{l}{\textit{Natively in CAMINO}} \\
                Bonmin & 0.1765 & 11.43 \\
                CIA~\citep{Buerger2021} & 0.1771 & 0.029 \\
                S-B-MIQP (w. GN Hessian) & 0.1765 & 0.35 \\
                \midrule
                \multicolumn{3}{l}{\textit{Loading .nl file}} \\
                Gurobi$^*$ & 0.1765 & 6.16 \\
                SCIP$^*$ & 0.1765 & 3.92 \\
                SHOT & 0.1775 & 3.43 \\
                S-B-MIQP (w. exact Hessian) & 0.1792 & 2.58 \\
                \bottomrule
                \multicolumn{3}{l}{\multirow{2}{*}{\parbox{5cm}{$^*$Global solvers.}}}
        \end{tabular}
        \caption{Comparison of different algorithms for the solution of \eqref{op: unstable ocp}. \label{tab: unstable ocp results}}
\end{table}

\subsection{MIOC of a renewable energy system}
To assess the performance of Algorithm \ref{alg: benders region s-miqp} on an example of real-world complexity, we consider an extended version of the solar-thermal-climate-system (STCS) described in \citep{Buerger2021} and physically installed at Karlsruhe University of Applied Sciences.
The system provides cooling for the building's main hall by means of thermal machines driven by solar-thermal energy.
Specifically, the system consists of an \ac{ACM} and a \ac{HP}.
The first machine can be used in \ac{AC} mode, during which the solar thermal heat drives the machine ($b_\mathrm{ac}=1$), or in \ac{FC} ($b_\mathrm{fc}=1$), where the cooling tower installed on the roof of the building can directly cool down the medium at ambient temperature.
The second machine is a regular \ac{HP} that has been installed more recently to provide flexibility and extra cooling power.
When it is switched on ($b_\mathrm{hp}=1$), it immediately provides cooling energy since the machine is connected to the low-temperature storage and driven by electric energy.
The electric energy required to drive the system is either bought from the grid or supplied by on-site PV panels.
A schematic of the current plant is contained in~\citep{Ghezzi2023a}.
Experimental operations and numerical case studies have been carried out on this system in \citep{Buerger2019, Buerger2020, Buerger2021, Buerger2023, Ghezzi2023a}.

The system dynamics are modeled via a set of \acp{ODE} with $\xi \in \R^{19}$ differential states, $\mu \in \R^6$ continuous controls and $\nu \in \{0, 1\}^3$ binary controls.
The system state includes the temperature of the flat plate and vacuum tube solar collectors, $T_\mathrm{fpsc}$ and $T_\mathrm{vtsc}$ respectively, four temperature levels in the stratified high-temperature storage, $T_\mathrm{ht, i}, i=1, 2, 3, 4$, the temperature of the low-temperature storage $T_\mathrm{lt}$, and the temperature inside the primary and secondary solar circuit, $T_\mathrm{psc}$ and $T_\mathrm{ssc}$, respectively.
The continuous controls regulate the electric power absorbed from the grid, velocity and pressure of the pumps in the solar circuit, and the input/output flow rate of the high-temperature storage.
As described above, there are three binary controls that decide the switching on/off of the different machines.
The system is subject to several ambient conditions, represented as time-varying parameters in the \ac{NLP}.
These are the ambient temperature, the solar irradiance on the solar collectors, the power generated by the local PV panels, the price of electric energy, and the desired cooling load profile.

It is now possible to formulate a \ac{MIOCP} that aims to provide the specified cooling power while operating the system safely and energy-efficiently.
Using a direct approach, we discretize the \ac{MIOCP}, which has a time horizon of 24~hours, via Gauss-Radau collocation of order~3.
We divide the time horizon into $N = 24$ intervals with a sampling time of 1~hour.
To guarantee feasibility of the \ac{MIOCP}, we introduce $n_\mathrm{s} = 24$ slack variables in each discretization interval, which are penalized linearly and quadratically in the cost function.
Therefore, the STCS problem has $N \cdot (19 \cdot d + 6 + 3 + n_\mathrm{s}) = 2160$ variables, of which $3 \cdot N = 72$ are binary.

By a stage-wise concatenation of the variables, we define the vector of continuous and binary decision variables as $x$ and $y$, respectively.
Thus, the resulting \ac{MINLP} can be stated compactly as
\begin{mini}
        {\substack{x \in \R^{n_x}, \\y \in \{0, 1\}^{n_y}}}{\norm{f_1(x, y)}^2 + f_2(x, y)}{}{\label{op: stcs ocp}}
        \addConstraint{g(x, y)}{\leq 0}
        \addConstraint{h(x, y)}{= 0,}
\end{mini}
where the cost function is the sum of a quadratic term, aiming to minimize constraint violation and achieve smooth actuation of the mixing valves, and a nonlinear one, defined by $f_2$, which incorporates the electricity cost for operating the system.
The special structure of the cost function allows for positive semidefinite Hessians for the \acp{MIQP} via the Gauss-Newton approximation \citep{Messerer2021a}.

The \ac{MIOCP} is modeled and solved using \texttt{CAMINO}.
We compare the solutions obtained with S-B-MIQP, S-B-MIQP-ee, the specialized algorithm CIA~\citep{Sager2011a,Buerger2020a}, and the Bonmin nonlinear branch-and-bound method.
The algorithms share the same parameters where applicable.
The MINLP gap for termination is set to $10^{-2}$ and the computation time limit is set to 30~minutes for each algorithm.
IPOPT~\citep{Waechter2006} uses HSL ma57~\citep{HSL} as the internal linear solver.
The \ac{MIP} subproblems are solved using Gurobi with multithreading enabled and a solution pool of dimension 3.
Moreover, the MIP gap of $\prob_\mathrm{BR\text{-}MIQP}$ is set to 10\%, while that of $\prob_\mathrm{LB\text{-}MILP}$ is set to 5\%.
The time limit for each master problem in every S-B-MIQP iteration is the maximum between 600~seconds and the remaining overall MINLP time budget.
If the master problems hit the time limit, we accept the available incumbent solution, even if it has a MIP gap larger than prescribed.
If no solution is available, the corresponding master problem is considered a failure.
If $\prob_\MILP$ cannot be solved, S-B-MIQP terminates and returns the best solution found.
The two hyper-parameters of S-B-MIQP, $\alpha$~\eqref{eq: reduced rhs J_bar} and $\rho$~\eqref{eq: grad expansion}, are set to their default values of $0.5$ and $1.5$, respectively.
For more details regarding the model and solver options, we invite the reader to consult the published code\footnote[1]{\url{https://github.com/minlp-toolbox/CAMINO/blob/main/camino/problems/solarsys/__init__.py}}.

We report the objective values of the solutions and the runtimes in Table~\ref{tab: stcs ocp results}.
\begin{table}
        \centering
        \ra{1.2}
        \begin{tabular}{l|rrr} \toprule
                Algorithm & Objective & Runtime (mm:ss) & Timestamp Best Solution \\ \midrule
                \textit{Relaxed} NLP            & 1547.33 & 00:05 &  -               \\
                Bonmin                          & 2409.66 & 30:00 &  02:43        \\
                CIA \citep{Buerger2021}         & 6370.03 & 00:08 &  -               \\
                S-B-MIQP                        & 2288.40 & 30:00  & 01:28 \\
                S-B-MIQP-ee                     & 2288.40 & 30:00  & 01:28 \\
                \bottomrule
        \end{tabular}
        \caption{Comparison of different algorithms for the solution of STCS~\eqref{op: stcs ocp}. \label{tab: stcs ocp results}}
\end{table}
For the \ac{CIA} algorithm, we quickly obtain a solution but the corresponding objective is fairly high, corresponding to a very suboptimal operating strategy for the energy system.
For the other algorithms, the termination is triggered by reaching the time limit of 30 minutes.
However, in each case the best solution is found much earlier.
Bonmin finds its best solution in less than 3 minutes while S-B-MIQP takes less than 2 minutes.
Differently from CIA, both Bonmin and S-B-MIQP deliver solutions that result in a high-performing operation of the plant.
In Figure~\ref{fig: trajectory stcs}, we plot the optimal trajectories of the binary controls and of a selected subset of the state extracted from the best solution computed by S-B-MIQP.
The plots show an almost optimal operation of the system, where temperature bounds and predictions of the external parameters over the horizon have been exploited by the optimizer.
However, since constraints are imposed in a soft way, the solution exhibits a small violation of the upper bound of the low-temperature storage.
Note that the free-cooling mode is never active, meaning that the system never dissipates heat into the environment rather storing it in the high temperature water tank.
\begin{figure}
        \centering
        \includegraphics[width=\columnwidth]{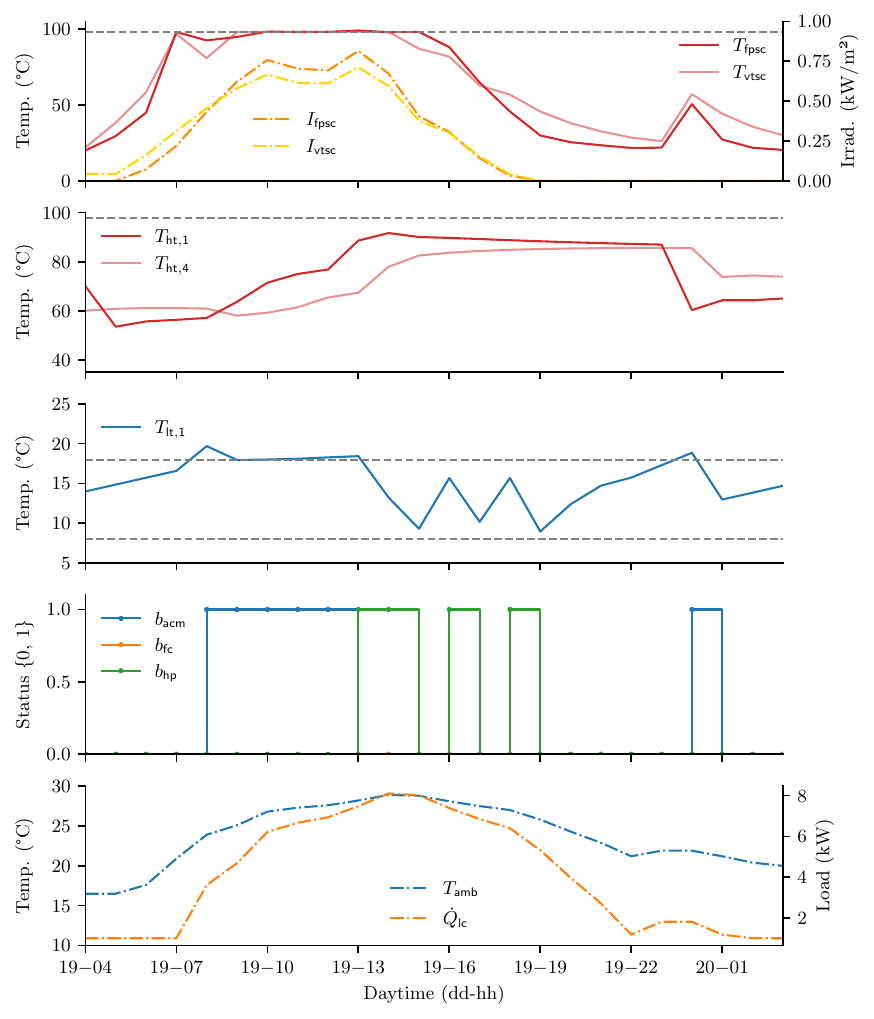}
        \caption{State and binary control trajectory obtained by solving \eqref{op: stcs ocp} via S-B-MIQP.
        The plotted solution is obtained at iteration 1 and has the lowest objective found. The bounds on state and control are the dashed gray lines. The ambient conditions are represented by dashed-dotted lines, we have solar irradiance on the flat plate and vacuum tube solar collectors, $I_\mathrm{fpsc}, I_\mathrm{vtsc}$, respectively, the ambient temperature, $T_\mathrm{amb}$, and the cooling load profile, $\dot{Q}_\mathrm{lc}$.}
        \label{fig: trajectory stcs}
\end{figure}

In Table \ref{tab: stcs s-b-miqp iteration}, we report the results of each iteration performed by S-B-MIQP.
The first row shows the objective value of the relaxed $\prob_\NLP$ and the objective of $\prob_\mathrm{BR-MIQP}$ constructed around the relaxed $\prob_\NLP$ solution, as described in Sec.~\ref{subsec: implementation details}.
Notably, for S-B-MIQP the best solution is obtained in the first iteration, by solving $J(y_1)$.
We believe this occurs because the solution of the relaxed $\prob_\NLP$ provides an informative starting point for constructing the first $\prob_\mathrm{BR-MIQP}$, whose solution already yields the best integer controls found.
During the remaining computation time, both S-B-MIQP variants attempt to improve this solution, but without success.
\andrea{
The last three columns of Table~\ref{tab: stcs s-b-miqp iteration} report the wall time of each subproblem.
The first two $\prob_\mathrm{BR-MIQP}$ are quite expensive to solve compared to $\prob_\NLP$ and the first ten $\prob_\mathrm{LB-MILP}$.
From iteration 14, also the $\prob_\mathrm{LB-MILP}$ requires longer computation times.
These computation time could be reduced implementing a single-tree strategy for the solution of $\prob_\mathrm{LB-MILP}$ \citep{Quesada1992,Abhishek2006} while adding new constraints via ``lazy constraints'' callbacks, available for instance in Gurobi.
}

\begin{table}
\setlength{\tabcolsep}{4pt}
\centering
\begin{tabular}{@{}c|rrrrrrrr@{}}
\toprule
Iter. $k$ &
$\UB$ &
$\LB$ &
$J(y_k)$ &
$V_{\mathrm{MIQP},k}$ &
$V_{\mathrm{MILP},k}$ &
NLP time &
MIQP time &
MILP time \\
\midrule
0  & $\infty$ & $-\infty$ & 1547.33$^*$ & 2134.29 & --        & 2.996 & 78.945 & --      \\
1  & $\infty$ & 1547.33   & \textbf{2288.40} & 2043.05 & -- & 5.729 & 57.598 & --      \\
2  & 2288.40 & 1547.33   & 2438.16 & --       & -10475.65 & 4.648 & --     & 0.819   \\
3  & 2288.40 & 1547.33   & 329568.02 & --    & -6604.50  & 2.315 & --     & 6.594   \\
4  & 2288.40 & 1547.33   & 92830.58 & --    & -4460.51  & 7.080 & --     & 0.974   \\
5  & 2288.40 & 1547.33   & 94530.31 & --    & -4533.46  & 5.417 & --     & 1.039   \\
6  & 2288.40 & 1547.33   & 12753.07 & --    & -2511.64  & 4.806 & --     & 1.180   \\
7  & 2288.40 & 1547.33   & 42158.56 & --    & -2156.02  & 1.647 & --     & 5.020   \\
8  & 2288.40 & 1547.33   & 12578.73 & --    & -1474.22  & 8.012 & --     & 1.008   \\
9  & 2288.40 & 1547.33   & 18825.85 & --    & -1428.67  & 11.123& --     & 8.903   \\
10 & 2288.40 & 1547.33   & 8360.52  & --    & -1412.07  & 6.128 & --     & 17.219  \\
11 & 2288.40 & 1547.33   & 5345.72  & --    & -1401.91  & 7.885 & --     & 24.223  \\
12 & 2288.40 & 1547.33   & 7135.18  & --    & -1403.42  & 6.328 & --     & 50.230  \\
13 & 2288.40 & 1547.33   & 4378.46  & --    & -1372.62  & 5.632 & --     & 52.926  \\
14 & 2288.40 & 1547.33   & 3890.60  & --    & -1348.06  & 6.702 & --     & 99.330  \\
15 & 2288.40 & 1547.33   & 3898.51  & --    & -1308.88  & 5.095 & --     & 29.526  \\
16 & 2288.40 & 1547.33   & 3956.89  & --    & -1233.70  & 9.644 & --     & 123.145 \\
17 & 2288.40 & 1547.33   & 2806.25  & --    & -1143.33  & 5.575 & --     & 46.358  \\
18 & 2288.40 & 1547.33   & 2735.66  & --    & -1112.57  & 7.614 & --     & 57.206  \\
19 & 2288.40 & 1547.33   & 3518.17  & --    & -817.74   & 4.942 & --     & 47.172  \\
20 & 2288.40 & 1547.33   & 2763.79  & --    & -645.47   & 7.038 & --     & 66.940  \\
21 & 2288.40 & 1547.33   & 3391.25  & --    & -502.84   & 7.150 & --     & 104.462 \\
22 & 2288.40 & 1547.33   & 2658.91  & --    & -266.93   & 5.481 & --     & 119.463 \\
23 & 2288.40 & 1547.33   & 2571.64  & --    & 660.16    & 4.431 & --     & 105.601 \\
24 & 2288.40 & 1547.33   & 14361.66 & --    & 660.25    & 7.390 & --     & 130.488 \\
25 & 2288.40 & 1547.33   & 11681.92 & --    & 668.18    & 11.496& --     & 135.643 \\
26 & 2288.40 & 1547.33   & 16616.70 & --    & 670.43    & 9.564 & --     & 82.265  \\
27 & 2288.40 & 1547.33   & 14503.88 & --    & 681.85    & 6.446 & --     & 82.006  \\
28$^\dagger$ & 2288.40 & 1547.33   & 14557.06 & --    & 709.98    & 5.919 & --     & 79.970  \\
\bottomrule
\multicolumn{9}{l}{$^*$relaxed solution; $^\dagger$iterations stopped due to time limit.}%
\end{tabular}
\caption{Iterations of S-B-MIQP for the solution of STCS \eqref{op: stcs ocp}}\label{tab: stcs s-b-miqp iteration}
\end{table}

To conclude, with this example we aimed to show that S-B-MIQP can be applied out-of-the-box to a large and complex \ac{MINLP} with satisfactory results.
In this case, the termination of S-B-MIQP was triggered by the time limit of 30 minutes but already in less than 2 minutes an integer feasible solution was available, which for this case coincided with the best solution found.

\section{Conclusion and outlook}\label{sec: conclusion}

We presented a novel algorithm for solving mixed-integer nonlinear programming problems.
We showed that the algorithm combines cutting planes based on generalized Benders' decomposition and outer approximation efficiently and converges to the global optimum or with a certificate of infeasibility for convex \acp{MINLP}.
We proposed an extension for treating nonconvex MINLPs employing a heuristic to modify the generated cutting planes.
The extension does not alter the results for convex MINLPs while it makes the algorithm directly applicable to nonconvex problems.
The algorithm was compared to Bonmin, Gurobi, SCIP and SHOT for a large subset of both convex and nonconvex MINLPs from the MINLPLib.
The results show that the proposed algorithm is particularly suited for nonconvex MINLPs while it closely follows the performance of SHOT for convex MINLPs.
Finally, we presented the results obtained with the proposed algorithm in two cases of optimal control for switched systems.

The open-source software package \texttt{CAMINO}\footnote[1]{\url{https://github.com/minlp-toolbox/CAMINO}}, developed to implement the proposed algorithm, is coded in Python and relies on CasADi for modelling the optimization problems and interfacing with required solvers.
Moreover, \texttt{CAMINO} includes implementations of various methods found in the literature.
\andrea{
A welcome addition is an improved AMPL-file generation from CasADi for \acp{OCP}, enabling a quick interface with any AMPL-compatible solver.
For higher efficiency, a direct interface between CasADi's expression graph and those of SHOT, SCIP, and Gurobi could be developed, though this would require more implementation effort.
}

Overall, the proposed algorithm shows promising solution quality and computation time results.
This performance might be further improved by \andrea{an efficient implementation in compiled code}, adding preprocessing routines at the MINLP level, such as bound tightening, and algorithmic strategies like the single-tree mode used in SHOT, where the solution of the master problem can resume from previous iteration without rebuilding the branch-and-bound tree.
\andrea{In the future we aim at extending the proposed algorithm to become a global solver for \emph{nonconvex} MINLPs, e.g., by using ideas from~\citep{Li2012}.}

\bibliographystyle{spbasic}      %
\bibliography{sbmiqp-bib.bib}   %

\begin{thebibliography}{63}
\providecommand{\natexlab}[1]{#1}
\providecommand{\url}[1]{{#1}}
\providecommand{\urlprefix}{URL }
\expandafter\ifx\csname urlstyle\endcsname\relax
  \providecommand{\doi}[1]{DOI~\discretionary{}{}{}#1}\else
  \providecommand{\doi}{DOI~\discretionary{}{}{}\begingroup \urlstyle{rm}\Url}\fi
\providecommand{\eprint}[2][]{\url{#2}}

\bibitem[{Abbasi-Esfeden et~al.(2023)Abbasi-Esfeden, Van~Roy, and Swevers}]{Abbasi2023}
Abbasi-Esfeden R, Van~Roy W, Swevers J (2023) Iterative switching time optimization for mixed-integer optimal control problems. In: Proceedings of the European Control Conference (ECC), IEEE, pp 1--6

\bibitem[{Abhishek et~al.(2006)Abhishek, Leyffer, and Linderoth}]{Abhishek2006}
Abhishek K, Leyffer S, Linderoth JT (2006) Filmint: An outer-approximation-based solver for nonlinear mixed integer programs. Preprint ANL/MCS-P1374-0906, Mathematics and Computer Science Division, Argonne National Laboratory

\bibitem[{Achterberg(2009)}]{Achterberg2009}
Achterberg T (2009) {SCIP}: solving constraint integer programs. Mathematical Programming Computation 1:1--41

\bibitem[{Andersson et~al.(2019)Andersson, Gillis, Horn, Rawlings, and Diehl}]{Andersson2019}
Andersson JAE, Gillis J, Horn G, Rawlings JB, Diehl M (2019) {CasADi} -- a software framework for nonlinear optimization and optimal control. Mathematical Programming Computation 11(1):1--36

\bibitem[{Axelsson et~al.(2008)Axelsson, Wardi, Egerstedt, and Verriest}]{axelsson2008gradient}
Axelsson H, Wardi Y, Egerstedt M, Verriest E (2008) Gradient descent approach to optimal mode scheduling in hybrid dynamical systems. Journal of Optimization Theory and Applications 136(2):167--186

\bibitem[{Belotti et~al.(2009)Belotti, Lee, Liberti, Margot, and W{\"a}chter}]{Belotti2009}
Belotti P, Lee J, Liberti L, Margot F, W{\"a}chter A (2009) Branching and bounds tightening techniques for non-convex {MINLP}. Optimization Methods \& Software 24(4-5):597--634

\bibitem[{Belotti et~al.(2013)Belotti, Kirches, Leyffer, Linderoth, Luedtke, and Mahajan}]{Belotti2013}
Belotti P, Kirches C, Leyffer S, Linderoth J, Luedtke J, Mahajan A (2013) Mixed-integer nonlinear optimization. Acta Numerica 22:1--131

\bibitem[{Bertacco et~al.(2007)Bertacco, Fischetti, and Lodi}]{Bertacco2007}
Bertacco L, Fischetti M, Lodi A (2007) A feasibility pump heuristic for general mixed-integer problems. Discrete Optimization 4(1):63--76

\bibitem[{Bestuzheva et~al.(2023)Bestuzheva, Chmiela, MÃ¼ller, Serrano, Vigerske, and Wegscheider}]{Bestuzheva2023}
Bestuzheva K, Chmiela A, MÃ¼ller B, Serrano F, Vigerske S, Wegscheider F (2023) Global optimization of mixed-integer nonlinear programs with scip 8. Tech. Rep. 2301.00587, arXiv, \doi{10.48550/ARXIV.2301.00587}

\bibitem[{Bock and Plitt(1984)}]{Bock1984}
Bock HG, Plitt KJ (1984) A multiple shooting algorithm for direct solution of optimal control problems. In: Proceedings of the IFAC World Congress, Pergamon Press, pp 242--247

\bibitem[{Bonami et~al.(2005)Bonami, Biegler, Conn, Cornu{\'e}jols, Grossmann, Laird, Lee, Lodi, Margot, Sawaya, and W\"achter}]{Bonami2005}
Bonami P, Biegler L, Conn A, Cornu{\'e}jols G, Grossmann I, Laird C, Lee J, Lodi A, Margot F, Sawaya N, W\"achter A (2005) {A}n {A}lgorithmic {F}ramework {F}or {C}onvex {M}ixed {I}nteger {N}onlinear {P}rograms. Tech. rep., {IBM T. J. Watson Research Center}

\bibitem[{Boyd and Vandenberghe(2004)}]{Boyd2004}
Boyd S, Vandenberghe L (2004) Convex Optimization. University {P}ress, Cambridge

\bibitem[{B\"{u}rger et~al.(2019)B\"{u}rger, Zeile, Altmann-Dieses, Sager, and Diehl}]{Buerger2019}
B\"{u}rger A, Zeile C, Altmann-Dieses A, Sager S, Diehl M (2019) Design, implementation and simulation of an {MPC} algorithm for switched nonlinear systems under combinatorial constraints. Journal of Process Control 81:15 -- 30

\bibitem[{B\"{u}rger et~al.(2020{\natexlab{a}})B\"{u}rger, Bohlayer, Hoffmann, Altmann-Dieses, Braun, and Diehl}]{Buerger2020}
B\"{u}rger A, Bohlayer M, Hoffmann S, Altmann-Dieses A, Braun M, Diehl M (2020{\natexlab{a}}) A whole-year simulation study on nonlinear mixed-integer model predictive control for a thermal energy supply system with multi-use components. Applied Energy 258:114064

\bibitem[{B\"{u}rger et~al.(2020{\natexlab{b}})B\"{u}rger, Zeile, Hahn, Altmann-Dieses, Sager, and Diehl}]{Buerger2020a}
B\"{u}rger A, Zeile C, Hahn M, Altmann-Dieses A, Sager S, Diehl M (2020{\natexlab{b}}) pycombina: An open-source tool for solving combinatorial approximation problems arising in mixed-integer optimal control. In: Proceedings of the IFAC World Congress, vol~53, pp 6502--6508

\bibitem[{B\"{u}rger et~al.(2021)B\"{u}rger, Bull, Sawant, Bohlayer, Klotz, Besch\"{u}tz, Altmann-Dieses, Braun, and Diehl}]{Buerger2021}
B\"{u}rger A, Bull D, Sawant P, Bohlayer M, Klotz A, Besch\"{u}tz D, Altmann-Dieses A, Braun M, Diehl M (2021) Experimental operation of a solar-driven climate system with thermal energy storages using mixed-integer nonlinear model predictive control. Optimal Control Applications and Methods pp 1--27

\bibitem[{B{\"u}rger et~al.(2023)B{\"u}rger, Zeile, Altmann-Dieses, Sager, and Diehl}]{Buerger2023}
B{\"u}rger A, Zeile C, Altmann-Dieses A, Sager S, Diehl M (2023) A {G}auss--{N}ewton-based decomposition algorithm for nonlinear mixed-integer optimal control problems. Automatica 152:110967

\bibitem[{B{\"u}skens and Wassel(2013)}]{Bueskens2013}
B{\"u}skens C, Wassel D (2013) The {ESA} {NLP} solver {WORHP}. Modeling and optimization in space engineering pp 85--110

\bibitem[{Bussieck et~al.(2003)Bussieck, Drud, and Meeraus}]{Bussieck2003a}
Bussieck MR, Drud AS, Meeraus A (2003) Minlplib -- a collection of test models for mixed-integer nonlinear programming. INFORMS Journal on Computing 15(1):114--119

\bibitem[{Bynum et~al.(2021)Bynum, Hackebeil, Hart, Laird, Nicholson, Siirola, Watson, and Woodruff}]{bynum2021pyomo}
Bynum ML, Hackebeil GA, Hart WE, Laird CD, Nicholson BL, Siirola JD, Watson JP, Woodruff DL (2021) Pyomo--optimization modeling in {P}ython, vol~67, 3rd edn. Springer Science \& Business Media

\bibitem[{Byrd et~al.(2006)Byrd, Nocedal, and Waltz}]{Byrd2006}
Byrd RH, Nocedal J, Waltz RA (2006) {KNITRO}: An integrated package for nonlinear optimization. In: Pillo G, Roma M (eds) Large Scale Nonlinear Optimization, Springer Verlag, pp 35--59

\bibitem[{Dakin(1965)}]{Dakin1965}
Dakin R (1965) {A} tree-search algorithm for mixed integer programming problems. The Computer Journal 8:250--255

\bibitem[{Diehl et~al.(2023)Diehl, Ghezzi, Van~Roy, and Sager}]{Oberwolfach2023}
Diehl M, Ghezzi A, Van~Roy W, Sager S (2023) A sequential mixed-integer quadratic programming algorithm for solving {MINLP} arising in optimal control. Mixed-integer Nonlinear Optimization: A Hatchery for Modern Mathematics (35), \doi{10.4171/OWR/2023/35}

\bibitem[{Dolan and Mor{\'e}(2002)}]{Dolan2002}
Dolan ED, Mor{\'e} JJ (2002) Benchmarking optimization software with performance profiles. Mathematical programming 91:201--213

\bibitem[{Duran and Grossmann(1986)}]{Duran1986}
Duran M, Grossmann I (1986) {A}n outer-approximation algorithm for a class of mixed-integer nonlinear programs. Mathematical Programming 36(3):307--339

\bibitem[{Exler and Schittkowski(2007)}]{Exler2007}
Exler O, Schittkowski K (2007) A trust region sqp algorithm for mixed-integer nonlinear programming. Optimization Letters 1:269--280

\bibitem[{Fiacco(1983)}]{Fiacco1983}
Fiacco A (1983) {I}ntroduction to sensitivity and stability analysis in nonlinear programming. Academic Press, New York

\bibitem[{Fletcher and Leyffer(1994)}]{Fletcher1994}
Fletcher R, Leyffer S (1994) Solving mixed integer nonlinear programs by outer approximation. Mathematical Programming 66:327--349

\bibitem[{Forrest and Lougee-Heimer(2005)}]{Forrest2005}
Forrest J, Lougee-Heimer R (2005) Cbc user guide. In: Emerging theory, methods, and applications, INFORMS, pp 257--277

\bibitem[{Garey and Johnson(1979)}]{Garey1979}
Garey M, Johnson D (1979) {C}omputers and {I}ntractability: {A} {G}uide to the {T}heory of {NP-Completeness}. W.H. Freeman, New York

\bibitem[{Geoffrion(1972)}]{Geoffrion1972}
Geoffrion A (1972) {G}eneralized {B}enders {D}ecomposition. Journal of Optimization Theory and Applications 10:237--260

\bibitem[{Ghezzi et~al.(2023)Ghezzi, Simpson, Buerger, Zeile, Sager, and Diehl}]{Ghezzi2023a}
Ghezzi A, Simpson L, Buerger A, Zeile C, Sager S, Diehl M (2023) A {V}oronoi-based mixed-integer {G}auss-{N}ewton algorithm for {MINLP} arising in optimal control. Proceedings of the European Control Conference (ECC)

\bibitem[{Gupta and Ravindran(1985)}]{Gupta1985}
Gupta O, Ravindran A (1985) {B}ranch and {B}ound experiments in convex nonlinear integer programming. Management Science 31:1533--1546

\bibitem[{{Gurobi Optimization, LLC}(2024)}]{Gurobi}
{Gurobi Optimization, LLC} (2024) {Gurobi Optimizer Reference Manual}. \urlprefix\url{https://www.gurobi.com}, {L}ast accessed: 2024-04-01

\bibitem[{Hart et~al.(2011)Hart, Watson, and Woodruff}]{hart2011pyomo}
Hart WE, Watson JP, Woodruff DL (2011) Pyomo: modeling and solving mathematical programs in {P}ython. Mathematical Programming Computation 3(3):219--260

\bibitem[{{HSL}(2024)}]{HSL}
{HSL} (2024) {A} collection of {F}ortran codes for large scale scientific computation. \urlprefix\url{http://www.hsl.rl.ac.uk}, {L}ast accessed: 2024-04-01

\bibitem[{Huangfu and Hall(2018)}]{Huangfu2018}
Huangfu Q, Hall JJ (2018) Parallelizing the dual revised simplex method. Mathematical Programming Computation 10(1):119--142

\bibitem[{{IBM Corp.}(2022)}]{CPLEX}
{IBM Corp} (2022) IBM ILOG CPLEX V22.1, User's Manual for CPLEX. \urlprefix\url{https://www.ibm.com/products/ilog-cplex-optimization-studio}, {L}ast accessed: 2024-04-01

\bibitem[{K{\"o}ppe(2011)}]{Koppe2011}
K{\"o}ppe M (2011) On the complexity of nonlinear mixed-integer optimization. In: Mixed Integer Nonlinear Programming, Springer, pp 533--557

\bibitem[{Kronqvist et~al.(2016)Kronqvist, Lundell, and Westerlund}]{Kronqvist2016}
Kronqvist J, Lundell A, Westerlund T (2016) The extended supporting hyperplane algorithm for convex mixed-integer nonlinear programming. Journal of Global Optimization 64:249--272

\bibitem[{Kronqvist et~al.(2020)Kronqvist, Bernal, and Grossmann}]{Kronqvist2020}
Kronqvist J, Bernal DE, Grossmann IE (2020) Using regularization and second order information in outer approximation for convex {MINLP}. Mathematical Programming 180(1):285--310

\bibitem[{Lee et~al.(1999)Lee, Teo, Rehbock, and Jennings}]{lee1999control}
Lee HJ, Teo KL, Rehbock V, Jennings LS (1999) Control parametrization enhancing technique for optimal discrete-valued control problems. Automatica 35(8):1401--1407

\bibitem[{Li and Sun(2006)}]{Li2006}
Li D, Sun X (2006) Nonlinear integer programming, vol~84. Springer

\bibitem[{Li et~al.(2012)Li, Chen, and Barton}]{Li2012}
Li X, Chen Y, Barton PI (2012) Nonconvex generalized benders decomposition with piecewise convex relaxations for global optimization of integrated process design and operation problems. Industrial \& engineering chemistry research 51(21):7287--7299

\bibitem[{Lundell and Kronqvist(2022)}]{Lundell2022b}
Lundell A, Kronqvist J (2022) Polyhedral approximation strategies for nonconvex mixed-integer nonlinear programming in {SHOT}. Journal of Global Optimization 82(4):863--896

\bibitem[{Lundell et~al.(2022)Lundell, Kronqvist, and Westerlund}]{Lundell2022a}
Lundell A, Kronqvist J, Westerlund T (2022) The supporting hyperplane optimization toolkit for convex {MINLP}. Journal of Global Optimization 84(1):1--41

\bibitem[{{Matthias Miltenberger}(2024)}]{MittelmannBenchmark}
{Matthias Miltenberger} (2024) {Visualizations of Mittelmann benchmarks}. \urlprefix\url{https://mattmilten.github.io/mittelmann-plots/}, {L}ast accessed: 2024-04-01

\bibitem[{Messerer et~al.(2021)Messerer, Baumg{\"a}rtner, and Diehl}]{Messerer2021a}
Messerer F, Baumg{\"a}rtner K, Diehl M (2021) Survey of sequential convex programming and generalized {G}auss-{N}ewton methods. ESAIM: Proceedings and Surveys 71:64--88

\bibitem[{{MINLPL}ib(2025)}]{MINLPLib}
{MINLPL}ib (2025) A library of mixed-integer and continuous nonlinear programming instances. \urlprefix\url{https://www.minlplib.org/instances.html}, {L}ast updated: 2025-05-08, {G}it hash: 2f1d9c1a

\bibitem[{Misener and Floudas(2014)}]{Misener2014}
Misener R, Floudas CA (2014) Antigone: algorithms for continuous/integer global optimization of nonlinear equations. Journal of Global Optimization 59(2-3):503--526

\bibitem[{{MOSEK ApS}(2024)}]{Mosek}
{MOSEK ApS} (2024) \url{https://www.mosek.com}, {L}ast accessed: 2024-04-01

\bibitem[{Quesada and Grossmann(1992)}]{Quesada1992}
Quesada I, Grossmann I (1992) {A}n {LP/NLP} based branch and bound algorithm for convex {MINLP} optimization problems. Computers and Chemical Engineering 16:937--947

\bibitem[{Rawlings et~al.(2017)Rawlings, Mayne, and Diehl}]{Rawlings2017}
Rawlings JB, Mayne DQ, Diehl MM (2017) Model Predictive Control: Theory, Computation, and Design, 2nd edn. Nob Hill

\bibitem[{Robuschi et~al.(2021)Robuschi, Zeile, Sager, and Braghin}]{Robuschi2021}
Robuschi N, Zeile C, Sager S, Braghin F (2021) Multiphase mixed-integer nonlinear optimal control of hybrid electric vehicles. Automatica 123:109325

\bibitem[{Sager et~al.(2011)Sager, Jung, and Kirches}]{Sager2011a}
Sager S, Jung M, Kirches C (2011) Combinatorial integral approximation. Mathematical Methods of Operations Research 73(3):363--380

\bibitem[{Sahinidis(1996)}]{Sahinidis1996}
Sahinidis NV (1996) Baron: A general purpose global optimization software package. Journal of global optimization 8:201--205

\bibitem[{Sahinidis(2019)}]{Sahinidis2019}
Sahinidis NV (2019) Mixed-integer nonlinear programming 2018. Optimization and Engineering 20:301--306

\bibitem[{Smith and Pantelides(1999)}]{Smith1999}
Smith EM, Pantelides CC (1999) A symbolic reformulation/spatial branch-and-bound algorithm for the global optimisation of nonconvex minlps. Computers \& Chemical Engineering 23(4-5):457--478

\bibitem[{Thorsteinsson(2001)}]{Thorsteinsson2001}
Thorsteinsson ES (2001) Branch-and-check: A hybrid framework integrating mixed integer programming and constraint logic programming. In: Principles and Practice of Constraint Programming—CP 2001: 7th International Conference, Springer, pp 16--30

\bibitem[{Tsang et~al.(1975)Tsang, Himmelblau, and Edgar}]{Tsang1975}
Tsang T, Himmelblau D, Edgar T (1975) {O}ptimal control via collocation and non-linear programming. International Journal on Control 21:763--768

\bibitem[{W\"achter and Biegler(2006)}]{Waechter2006}
W\"achter A, Biegler LT (2006) On the implementation of an interior-point filter line-search algorithm for large-scale nonlinear programming. Mathematical Programming 106(1):25--57

\bibitem[{Westerlund and Pettersson(1995)}]{Westerlund1995}
Westerlund T, Pettersson F (1995) {A} cutting plane method for solving convex {MINLP} problems. Computers and Chemical Engineering 19:S131--S136

\bibitem[{Zeile et~al.(2021)Zeile, Robuschi, and Sager}]{Zeile2021}
Zeile C, Robuschi N, Sager S (2021) Mixed-integer optimal control under minimum dwell time constraints. Mathematical Programming 188(2):653--694

\end{thebibliography}

\section*{Statements and Declarations}
\subsection*{Acknowledgements}
A preliminary version of the algorithm proposed in this work has been presented in the Oberwolfach meeting ``Mixed-integer Nonlinear Optimization: A Hatchery for Modern Mathematics'' in August 2023 \cite{Oberwolfach2023}.
We thank Adrian B\"urger, Clemens Zeile, L\'eo Simpson, and Ramin Abbasi-Esfeden for inspiring discussions.

\subsection*{Funding}
Andrea Ghezzi and Moritz Diehl acknowledge funding from the European Union via ELO-X 953348, from DFG via projects 504452366 (SPP 2364), 560056112 (robust MPC), 535860958 (ALeSCo) and 525018088 (MAWERO), and from BMWK via 03EN3054B.
Wim Van Roy is supported by the Baekeland scholarship (Grant 182076), given by Atlas Copco Airpower NV, Wilrijk, Belgium, and by the Institute for the Promotion of Innovation through Science and Technology in Flanders (VLAIO), Belgium.
Sebastian Sager acknowledges funding from DFG via RTG 2297 and SPP 2331.

\subsection*{Competing Interests}
The authors have no relevant financial or non-financial interests to disclose.

\subsection*{Author Contributions}
\textbf{Andrea Ghezzi, Wim Van Roy}: Writing – original draft, Conceptualization, Visualization, Software.
\textbf{Sebastian Sager}: Writing – review \& editing, Supervision, Conceptualization.
\textbf{Moritz Diehl}: Writing – review \& editing, Supervision, Conceptualization, Resources, Project administration, Funding acquisition.

\subsection*{Code and Data Availability}
The full code is open-source, and specific references are included in this article. The packages we rely on are either open source or available for academic use. All data analyzed during this study are publicly available, URLs are included in this article.

\section*{Appendix: Tabular results of Section 4.1}

In the tables below, wall time is reported in secondsm and we use the acronym ``NaN'' when the algorithm fails to find a feasible solution within the 300-second time limit.

\begin{landscape}

\begin{scriptsize}

% [inline block 0: 2 envs, 55580 chars -> data_tex | \begin{longtable}{@{} lrrrrrrrrrrrr @{}} \caption{Benchmark results for convex MINLPs.}\label{tab: appendix convex probl...]


\end{scriptsize}

\end{landscape}

\end{document}